\documentclass[reqno,11pt]{amsart}

\usepackage{amssymb,amsmath,amsthm,amsfonts,mathtools,bbm,mathrsfs,enumitem,graphics,float,multirow,enumitem,cases}
\usepackage[normalem]{ulem}
\usepackage{hyperref}
\usepackage{comment}
\usepackage{hyperref}
\hypersetup{
    colorlinks,
    linkcolor={blue!80!black},
    citecolor={red!80!black},
    urlcolor={red!80!black}
}
\usepackage{xargs}                      
\usepackage{xcolor}  
\usepackage[colorinlistoftodos,prependcaption,textsize=tiny]{todonotes}
\newcommandx{\unsure}[2][1=]{\todo[linecolor=red,backgroundcolor=red!25,bordercolor=red,#1]{#2}}
\newcommandx{\change}[2][1=]{\todo[linecolor=blue,backgroundcolor=orange!25,bordercolor=blue,#1]{#2}}
\newcommandx{\info}[2][1=]{\todo[linecolor=OliveGreen,backgroundcolor=OliveGreen!25,bordercolor=OliveGreen,#1]{#2}}

\usepackage[top=1.6in,bottom=1.5in,left=1.3in,right=1.3in]{geometry}
\allowdisplaybreaks
\usepackage{cellspace}
\newcommand\redout{\bgroup\markoverwith{\textcolor{red}{\rule[0.5ex]{2pt}{0.8pt}}}\ULon}

\newtheorem{theorem}{Theorem}
\newtheorem{lemma}[theorem]{Lemma}
\newtheorem{claim}[theorem]{Claim}

\usepackage{tikz}
\usetikzlibrary{calc}

\tikzset{font= }

\newcommand\nc\newcommand
\nc\bfa{{\boldsymbol a}}\nc\bfA{{\boldsymbol A}}\nc\cA{{\mathscr A}}
\nc\bfb{{\boldsymbol b}}\nc\bfB{{\boldsymbol B}}\nc\cB{{\mathscr B}}
\nc\bfc{{\boldsymbol c}}\nc\bfC{{\boldsymbol C}}\nc\cC{{\mathscr C}}
\nc\bfd{{\boldsymbol d}}\nc\bfD{{\boldsymbol D}}\nc\cD{{\mathscr D}}
\nc\bfe{{\boldsymbol e}}\nc\bfE{{\boldsymbol E}}\nc\cE{{\mathscr E}}
\nc\bff{{\boldsymbol f}}\nc\bfF{{\boldsymbol F}}\nc\cF{{\mathscr F}}
\nc\bfg{{\boldsymbol g}}\nc\bfG{{\boldsymbol G}}\nc\cG{{\mathscr G}}
\nc\bfh{{\boldsymbol h}}\nc\bfH{{\boldsymbol H}}\nc\cH{{\mathscr H}}\nc\fH{{\mathfrak H}}
\nc\bfi{{\boldsymbol i}}\nc\bfI{{\boldsymbol I}}\nc\cI{{\mathcal I}}
\nc\bfj{{\boldsymbol j}}\nc\bfJ{{\boldsymbol J}}\nc\cJ{{\mathscr J}}
\nc\bfk{{\boldsymbol k}}\nc\bfK{{\boldsymbol K}}\nc\cK{{\mathscr K}}
\nc\bfl{{\boldsymbol l}}\nc\bfL{{\boldsymbol L}}\nc\cL{{\mathscr L}}
\nc\bfm{{\boldsymbol m}}\nc\bfM{{\boldsymbol M}}\nc\cM{{\mathscr M}}
\nc\bfn{{\boldsymbol n}}\nc\bfN{{\boldsymbol N}}\nc\sN{{\mathscr N}}
\nc\bfo{{\boldsymbol o}}\nc\bfO{{\boldsymbol O}}\nc\cO{{\mathscr O}}
\nc\bfp{{\boldsymbol p}}\nc\bfP{{\boldsymbol P}}\nc\cP{{\mathscr P}}
\nc\bfq{{\boldsymbol q}}\nc\bfQ{{\boldsymbol Q}}\nc\cQ{{\mathscr Q}}
\nc\bfr{{\boldsymbol r}}\nc\bfR{{\boldsymbol R}}\nc\cR{{\mathscr R}}
\nc\bfs{{\boldsymbol s}}\nc\bfS{{\boldsymbol S}}\nc\cS{{\mathscr S}}
\nc\bft{{\boldsymbol t}}\nc\bfT{{\boldsymbol T}}\nc\cT{{\mathscr T}}
\nc\bfu{{\boldsymbol u}}\nc\bfU{{\boldsymbol U}}\nc\cU{{\mathscr U}}
\nc\bfv{{\boldsymbol v}}\nc\bfV{{\boldsymbol V}}\nc\cV{{\mathscr V}}
\nc\bfw{{\boldsymbol w}}\nc\bfW{{\boldsymbol W}}\nc\cW{{\mathscr W}}
\nc\bfx{{\boldsymbol x}}\nc\bfX{{\boldsymbol X}}\nc\cX{{\mathscr X}}
\nc\bfy{{\boldsymbol y}}\nc\bfY{{\boldsymbol Y}}\nc\cY{{\mathscr Y}}
\nc\bfz{{\boldsymbol z}}\nc\bfZ{{\boldsymbol Z}}\nc\cZ{{\mathscr Z}}
\nc\pp{\mathbb{P}}
\nc\ee{\mathbb{E} }

\renewcommand{\le}{\leqslant}
\renewcommand{\leq}{\leqslant}
\renewcommand{\ge}{\geqslant}
\renewcommand{\geq}{\geqslant}

\DeclareMathOperator{\Var}{Var}
\nc{\Cay}{{\sf Cay}}

\nc{\ff}{{\mathbb F}}

\title[]{Bootstrap percolation on the hypercube}
\author[]{Fengxing Zhu}\thanks{Institute for Systems Research and Department of ECE, University of Maryland, College Park, MD 20742, USA, fengxing@terpmail.umd.edu. Supported in part by NSF grant CCF 2330909.}
\date{}

\begin{document}
\begin{abstract}
    In this paper we obtain the critical probability  $p_c(Q_{n},r)$ for bootstrap percolation with the infection threshold $r=f(n)n^a$ on the $n$-dimensional hypercube $Q_{n}$, where $0<a<1$ and $f(n)=n^{o(1)}$. More precisely, we obtain the exact first-order and second-order terms of $p_c(Q_{n},r)$ by extending the main theorem of Balogh, Bollob{\'a}s, and Morris (2009). 
\end{abstract}
\maketitle

\section{Introduction}
Bootstrap percolation was first introduced by Chalupa, Leith, and Reich \cite{Chalupa_1979}. Given an undirected graph $G=(V,E)$ and an integer $r\geq 1$, the $r$-neighbor bootstrap percolation process is a discrete-time infection model in which vertices are classified as either infected or healthy. The infection is irreversible, meaning that infected vertices remain infected forever. Let $A_0\subseteq V$ be the set of initially infected vertices, and let $A_i$ denote the set of infected vertices up to step $i$. At each subsequent step, a healthy vertex becomes infected if at least $r$ of its neighbors were infected in the preceding steps. More precisely, for every $i\geq 1$,
$$
A_i=A_{i-1}\cup \{v\in V:|\mathcal{N}(v)\cap A_{i-1}|\geq r \},
$$
where $\mathcal{N}(v)$ denotes the set of neighbors of $v$ in $G$. The integer $r$ is referred to as the infection threshold. An initial configuration $A_0$ is called a contagious set if the infection eventually reaches every vertex of $G$. In that case we say that the process percolates.

A natural probabilistic model based on this process, known as random bootstrap percolation, arises when each vertex is initially infected independently with probability $p$. One of the main objectives in studying this random process is to determine the critical probability. Specifically, the critical probability is defined as
$$
p_c(G,r):=\sup \{p\in(0,1):\mathbb{P}_p(A\text{ percolates on }G)\leq\frac{1}{2} \},
$$
where $A$ is the random set of initially infected vertices and $\mathbb{P}_p$ denotes the corresponding probability measure.

 The study of bootstrap percolation on the $d$-dimensional box $[n]^d=\{1,2,\cdots,n\}^d$ dates back to the work of Aizenman and Lebowitz \cite{Aizenman_1988}, who showed that, for $d\geq r=2$, the critical probability satisfies
$$
p_c([n]^d,2)=\Theta\left((\log n)^{1-d}\right).
$$
Building on the sharp threshold results of Friedgut \cite{Friedgut}, Balogh and Bollob{\'a}s \cite{BALOGH2003305} established a sharp transition for bootstrap percolation on $d$-dimensional boxes. A major breakthrough was subsequently achieved by Holroyd \cite{Holroyd2002SharpMT}, who determined the exact leading-order asymptotics of the critical probability in the two-dimensional case $d=r=2$. Further refinements by Gravner and Holroyd \cite{holroyd_2008}, followed by Hartarsky and Morris \cite{Robert}, established the order of the second-order term, yielding
$$
p_c([n]^2,2)=\frac{\pi}{18\log n}-\frac{\Theta(1)}{(\log n)^{3/2}}.
$$

For higher dimensions, Cerf and Cirillo \cite{Cirillo} proved that when $d=r=3$, the critical probability is of order $1/\log\log n$. This was extended by Cerf and Manzo \cite{CERF200269}, who established that, for all $d\geq r\geq 3$,
$$
p_c([n]^d,r)=\Theta\left(\left(\log^{(r-1)}n\right)^{r-d-1}\right),
$$
where $\log^{(r-1)}n$ denotes the $(r-1)$-fold iterated logarithm of $n$. Subsequently, Balogh, Bollob{\'a}s, and Morris \cite{Balogh2} obtained the exact first-order term of the critical probability for $d=r=3$. Their results were further generalized by Balogh, Bollob{\'a}s, Duminil-Copin, and Morris \cite{Balogh}, who established the sharp asymptotic threshold for the $r$-neighbor bootstrap percolation process on $[n]^d$ for every $d\geq r\geq 2$. In particular, they proved that there exists an explicit constant $\lambda(d,r)>0$ such that
$$
p_c([n]^d,r)=
\left(\frac{\lambda(d,r)+o(1)}{\log^{(r-1)}n}\right)^{d-r+1}.
$$

In addition to the extensive research on random bootstrap percolation in $d$-dimensional boxes, the hypercube has also received considerable attention. Let $Q_n$ denote the $n$-dimensional binary hypercube, whose vertex set is $\{0,1\}^n$, with two vertices being adjacent if their Hamming distance is one. For the $2$-neighbor bootstrap percolation process on $Q_n$, Balogh and Bollob{\'a}s \cite{balogh_bollobas_2006} proved that there exist positive constants $c_1$ and $c_2$ such that
$$
\frac{c_1}{n^2}2^{-2\sqrt{n}}
\leq p_c(Q_n,2)
\leq \frac{c_2}{n^2}2^{-2\sqrt{n}}.
$$
This result was subsequently refined by Balogh, Bollob{\'a}s, and Morris \cite{balogh_bollobas_morris_2010}, who obtained the exact first-order term and good estimates for the second-order term of the critical probability. In particular, they showed that there exists an explicit constant $\lambda>0$ for which
$$
\frac{16\lambda}{n^2}
\left(1+\frac{\log n}{\sqrt{n}}\right)2^{-2\sqrt{n}}
\leq p_c(Q_n,2)
\leq
\frac{16\lambda}{n^2}
\left(1+\frac{5(\log n)^2}{\sqrt{n}}\right)2^{-2\sqrt{n}}.
$$

Another important regime of bootstrap percolation on the hypercube arises when the infection threshold is $r=n/2$. In this setting, Balogh, Bollob{\'a}s, and Morris \cite{balogh_bollobas_morris_2009} investigated the critical probability on $Q_n$ and established the following bounds:
$$
\frac{1}{2}
-\frac{1}{2}\sqrt{\frac{\log n}{n}}
-\frac{2\log\log n}{\sqrt{n\log n}}
\leq p_c\left(Q_n,\frac{n}{2}\right)\
\leq
\frac{1}{2}
-\frac{1}{2}\sqrt{\frac{\log n}{n}}
+\frac{\log\log n}{2\sqrt{n\log n}}
+o\left(\frac{\log\log n}{2\sqrt{n\log n}}\right).
$$

Recently, Collares, Erde, Geisler, and Kang \cite{collares2024universalbehaviourmajoritybootstrap} extended the study of random majority bootstrap percolation from the hypercube to a broad class of high-dimensional geometric graphs, including Hamming graphs. They established universal behavior in the location and width of the critical window, showing that these graph families exhibit similar quantitative behavior under suitable structural assumptions. In a subsequent work \cite{COLLARES2026104347}, the same authors further developed this framework by extending their results to certain high-dimensional graphs of super-exponential order, including the permutahedron. They also obtained improved bounds on the critical window, particularly for the hypercube, and investigated majority bootstrap percolation on irregular graphs. 

\emph{Overview of the results.} Although majority bootstrap percolation on the hypercube has been extensively studied in \cite{balogh_bollobas_morris_2009} and subsequently extended to more general classes of graphs in \cite{collares2024universalbehaviourmajoritybootstrap,COLLARES2026104347}, these works primarily focus on the setting where the infection threshold is half the degree of each vertex. In contrast, the regime in which the infection threshold scales as $r=n^a$ for $0<a<1$ on the hypercube has remained unexplored prior to the present work. Related questions were raised by Collares, Erde, Geisler, and Kang in Section 9 of \cite{collares2024universalbehaviourmajoritybootstrap}. In particular, they asked which classes of $d$-regular graphs have a critical probability of order $d^{-a}$ for some $0<a<1$, and how the critical probability behaves when the infection threshold is set to $r=\sqrt{d}$.

Our work provides partial answers to these questions by determining the first-order and second-order terms of the critical probability for bootstrap percolation on the hypercube with infection threshold $r=f(n)n^a$, where $0<a<1$ and $f(n)=n^{o(1)}$. In particular, we prove the following theorem. 

\begin{theorem} \label{main theorem}
Let $r=f(n)n^a$ where $0<a<1$ and $f(n)=n^{o(1)}$. For every $\epsilon >0$ there exists an $n_0(\epsilon)$ such that for all $n \geq n_0(\epsilon)$, we have 
$$ \frac{r}{n}-\left(\sqrt{2-a}+\epsilon \right)\frac{\sqrt{r \log n}}{n} \leq p_c(Q_n,r) \leq \frac{r}{n}-\left(\sqrt{2-a}-\epsilon \right)\frac{\sqrt{r \log n}}{n}.$$
\end{theorem}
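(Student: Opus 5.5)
We follow the strategy of Balogh, Bollob\'as and Morris \cite{balogh_bollobas_morris_2009} for $r=n/2$, which identifies the second-order term through a first/second-moment analysis of ``locally unstable'' vertices. Write $p=\frac{r}{n}-\lambda\frac{\sqrt{r\log n}}{n}$. A fixed vertex has $\mathrm{Bin}(n,p)$ infected neighbours, with mean $np=r-\lambda\sqrt{r\log n}$ and variance about $r$ (since $p\to0$). By a moderate-deviation estimate, valid because $\lambda\sqrt{r\log n}=o(r^{2/3})$ when $r=n^{a+o(1)}$, we get
$$\mathbb{P}\bigl(\mathrm{Bin}(n,p)\ge r\bigr)=\exp\Bigl(-\bigl(\tfrac{\lambda^2}{2}+o(1)\bigr)\log n\Bigr)=n^{-\lambda^2/2+o(1)}.$$
The threshold $\lambda^2=2(2-a)$ would be the naive union-bound heuristic; the correct constant $\sqrt{2-a}$ comes from a more refined event. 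I would therefore first set up the right local event: a vertex $v$ together with a structure of about $n/r\approx n^{1-a}$ coordinate directions in which infection spreads. In the $r=n/2$ case the relevant quantity is the expected number of vertices that become infected in the first two rounds and then trigger a cascade. Here I expect the relevant count to be $2^n$ times a probability, but with entropy only $n^{O(1)}$ after symmetry reduction. The exponent $2-a$ should come out as $\lambda^2=2-a$ balancing $n^{2-a}$ choices (a vertex neighbourhood, $n$, times roughly $n^{1-a}$ extra directions) against a success probability $n^{-\lambda^2}$.

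\textbf{Upper bound (percolation when $\lambda<\sqrt{2-a}$).} I would show that with high probability there exist many ``seeds''. A seed is a vertex whose neighbourhood contains at least $r+k$ infected vertices, so that the infection spreads along a subcube. I would then argue deterministically that once a subcube of dimension of order $r$ is internally spanned, the rest of $Q_n$ follows. The deterministic step rests on the fact that every vertex outside has $p(n-d)+$ (contribution from the infected subcube) infected neighbours, and the deficit $\lambda\sqrt{r\log n}$ is covered once each vertex sees about $\sqrt{r\log n}$ extra infected neighbours. That deficit is covered by growing $d$ in stages: each newly infected layer supplies the neighbours needed by the next layer. Concentration at each layer comes from Chernoff plus a union bound over $2^n$ vertices, which is affordable because $p$ is only polynomially small. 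Existence of a seed comes from the second moment method on the count of seed vertices, whose expectation $2^n n^{-\lambda^2/2}$-type quantities diverge. The real work is showing that the much rarer event of the initial cascade starting has probability tending to $1$; its expectation is $n^{2-a-\lambda^2+o(1)}\to\infty$ times the polynomial entropy.

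\textbf{Lower bound (no percolation when $\lambda>\sqrt{2-a}$).} I would show that with high probability the process stalls after $O(1)$ rounds. In the first round, a vertex becomes infected only with probability $n^{-\lambda^2/2+o(1)}$, so the newly infected set is sparse. Two such vertices within distance $2$ of each other occur with density $n^{-\lambda^2+o(1)}$ per pair. The dangerous configurations are clusters of size about $n^{1-a}$ that give some vertex $r$ infected neighbours. Bounding their expected number by $n^{2-a-\lambda^2+o(1)}\to0$ in a suitable rooted sense, and then using a closure argument showing that any percolating process must create such a cluster (a witness structure, as in \cite{balogh_bollobas_morris_2009}), gives $\mathbb{P}(\text{percolation})\to0$.

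\textbf{Main obstacle.} The main obstacle is pinning down the exact combinatorial witness that yields the exponent $2-a$ in both directions. Its entropy must match the upper-bound construction, and it must be forced by every percolating configuration. This is the extension of the BBM main theorem that the abstract alludes to. I would try first to adapt their ``$A_2$ versus $A_1$'' counting, replacing $n/2$ by $r$ and tracking the factor $n/r=n^{1-a}$ that appears when an infected vertex's neighbourhood overlaps a candidate's.
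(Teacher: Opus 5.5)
Your proposal has genuine gaps in both directions. Write $C$ for your coefficient $\lambda$. Apart from the moderate-deviation estimate $\mathbb{P}(\mathrm{Bin}(n,p)\ge r)=n^{-C^2/2+o(1)}$, the mechanisms you propose either fail or are left unspecified.

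\textbf{The heuristic.} The constant needs no refined event. Compare the expected number $n\cdot n^{-C^2/2}$ of neighbours of a vertex infected in round one with that vertex's deficit $r-np=C\sqrt{r\log n}=n^{a/2+o(1)}$. This gives $1-C^2/2=a/2$ directly. No factor $n^{1-a}$, and no cluster of size $n^{1-a}$, is involved; the relevant scale is $n^{a/2}$.

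\textbf{Upper bound.} The seed/subcube picture does not work. A fully infected subcube gives each vertex outside it at most one infected neighbour, so it cannot spread when $r\ge 2$. Nor is ``Chernoff plus a union bound over $2^n$ vertices'' affordable: already $(1-p)^n=e^{-(1+o(1))r}\gg 2^{-n}$, so whp some vertex has no initially infected neighbour at all. The paper's argument is local. For fixed $x$, it applies a second-moment bound to the number of neighbours $y$ with $|\mathcal{N}(y)\cap A_0|\ge r$. Two neighbours of $x$ share only one further common neighbour, so $\mathrm{Cov}(I_y,I_z)=p(1-p)\,\mathbb{P}(U=r-1)^2$ is negligible. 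Hence this number is about $n^{1-C^2/2+o(1)}$, which is $\gg n^{a/2}$ exactly when $C^2<2-a$, giving $\mathbb{P}(x\notin A_2)\le n^{-\gamma}$. The idea you are missing is the amplification that beats $2^n$. If $x\notin A_5$, then $\Omega(n^3)$ vertices of $S(x,3)$ lie outside $A_2$. Splitting $S(x,3)$ into $O(n^2)$ classes of pairwise distance $\ge 6$ yields $cn$ of them with independent events. Hence $\mathbb{P}(x\notin A_5)\le \exp(-cn\log n)$, which survives the union bound over $Q_n$.

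\textbf{Lower bound.} A polynomial bound such as $n^{2-a-C^2+o(1)}$ on a rooted count cannot be summed over $2^n$ roots. Nor can you show directly that the original process stops after one or two rounds: $A_1\setminus A_0$ and $A_2\setminus A_1$ are nonempty whp, because the corresponding per-vertex probabilities are only polynomially small. What is needed is a per-vertex bound $\exp(-\Omega(n))$ for infection at some fixed late round.

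The paper gets it from a more generous process. Set $C'=\sqrt{2-a}+\epsilon$, $\lambda'=C'\sqrt{r\log n}$, $\delta=C'^2/2-1+a/2>0$, and fix $k>2/a$. Use thresholds $\rho_1<\rho_2<\cdots<\rho_{k+1}=r$ with gaps $d_2\approx\lambda'/\log n$ and $d_3=\cdots=d_{k+1}\approx q$, where $d_2q^{k-1}=3^kk!\,Kn/\log n$.
\begin{itemize}
\item A vertex infected at round $j$ has at least $d_j+1$ neighbours infected at round $j-1$.
\item Hence a vertex infected at round $k+1$ forces a witness chain whose last level is a set $T\subset S(x,k)$ of about $Kn/\log n$ vertices, all infected at round one.
\item Bernstein's inequality for $Y_T=\sum_{z\in S(x,k+1)}|\mathcal{N}(z)\cap T|\,\mathbb{I}_{\{z\in A_0\}}$ gives $\mathbb{P}(T\subset G_1)\le n^{-(C'^2/2-o(1))|T|}$.
\item There are at most $n^{(1-a/2+o(1))|T|}$ chains, the entropy coming from the factor $n/d_2$ at the last level.
\item This gives $\exp(-(\delta K-o(1))n)$ per vertex.
\end{itemize}
The condition $k>2/a$ is what makes the total lowering $\sum d_i$ equal to $o(\lambda')$, so the generous thresholds cost nothing in the exponent. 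Without such a device, your ``closure argument'' has no witness of exponential size to work with.
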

\emph{Our Contributions.}
Our first contribution lies in the proof of the upper bound in Theorem~\ref{main theorem}. Specifically, we establish that $\mathbb{P}(x\notin A_2)\leq n^{-\gamma}$ by combining Petrov's approximation in Lemma~\ref{lemma: Petrov}, Gordon's ratio in Lemma~\ref{lemma:Gordon}, and McKay's estimate in Lemma~\ref{Lemma:McKay} to obtain accurate estimates of binomial tail probabilities. Together with Chebyshev's inequality, these estimates allow us to show that, for any fixed vertex $x$, with probability at least $1-n^{-\gamma}$, the number of its neighbors that become newly infected in the first step is sufficient to bridge the gap between the infection threshold and the number of its initially infected neighbors.

Our second contribution lies in the proof of the lower bound in Theorem~\ref{main theorem}. Inspired by the approach of \cite{balogh_bollobas_morris_2009}, we introduce a more generous process in which the infection thresholds are strictly smaller than $r$ during the first few steps. The number of these steps depends on the value of $a$. The none uniform gaps between these infection thresholds are carefully chosen to achieve sufficiently strong exponential decay to compensate for the union bounds.

\emph{Our Proof Techniques.} Our main proof strategy follows that of \cite{balogh_bollobas_morris_2009}. The underlying heuristic is as follows. When the initial infection probability is below the critical probability, the bootstrap percolation process stops after only a few steps with high probability and when it is above the critical probability, all vertices become infected within a few steps with high probability.

The heuristic for the first-order term is that the critical probability is primarily determined by the local structure of the graph, with its leading term given by $\frac{r}{|\mathcal{N}(x)|}$. The heuristics for the second-order term is as follows. Let $r=n^a$ and write $ p=\frac{r-\lambda}{n}.$ For a fixed vertex $x$, let $X=|\mathcal{N}(x) \cap A_0| \sim \text{Binomial}(n,p)$. Then $\mathbb{E}(X)=np=r-\lambda$ and $\text{Var}(X)=np(1-p)=(r-\lambda)(1-o(1))$

Hence the probability that $x$ becomes newly infected in the first round is 
$$\mathbb{P}(X \geq r, x \notin A_0)=(1-p)\mathbb{P}(X \geq r).$$

If $ \lambda\gg\sqrt r$, $\lambda=o(r)$, and $\frac{\lambda}{\sqrt{r}}=o(n^{1/6})$, then by Lemmas \ref{lemma: Petrov} and \ref{lemma:Gordon}, we have 
$$
\mathbb{P}(X \geq r) =(1+o(1)) \frac{\sqrt r}{\sqrt{2\pi}\lambda} \exp\left(-\frac{\lambda^2}{2r}\right).$$
Now consider a typical healthy vertex $y$ and let $x \in \mathcal{N}(y)$. Initially $y$ has about $r-\lambda$ infected neighbors, so its deficit from the threshold is about $\lambda.$ Among its $n$ neighbors, roughly $n\mathbb{P}(X \geq r, x \notin A_0)$ new vertices become infected in the first round. Therefore the natural transition between all vertices being infected and occurs when $n\mathbb{P}(X \geq r, x \notin A_0) \approx \lambda$. Thus 
$$
n\frac{\sqrt r}{\sqrt{2\pi}\lambda}
e^{-\lambda^2/(2r)} \approx \lambda.
$$
Let $z=\frac{\lambda}{\sqrt r}$. Then we have $ \frac{n}{\sqrt{2\pi}z} e^{-z^2/2} \approx z\sqrt r,$ and so $e^{-z^2/2} \approx z^2\frac{\sqrt r}{n}$. Taking logarithms,
$$
\frac{z^2}{2}
=
\log\frac{n}{\sqrt r}
-2\log z.
$$
Since $r=n^a$, we have $ \log\frac{n}{\sqrt r}= \left(1-\frac {a}{2}\right)\log n.$ Therefore, $$
z^2
\approx (2-a)\log n.
$$
Consequently,
$$
\lambda \approx \left(\sqrt{2-a}\right) \sqrt{r\log n}.
$$

To establish the upper bound on the critical probability, we first show that the probability that a fixed vertex $x$ remains uninfected after the second step is at most $n^{-\gamma}$ for some constant $\gamma>0$. More precisely, we analyze $|\mathcal{N}(x)\cap (A_1\setminus A_0)|$ and show that, with probability $1-n^{-\gamma}$, this quantity is large enough to bridge the gap between the infection threshold and the expected number of initially infected neighbors of $x$. We then show that the probability that $x$ remains uninfected after the fifth step is exponentially small in $n$, in the order of $\exp(-n \log n)$, allowing us to apply a union bound over all vertices to complete the proof.

The main difference between our proof of the upper bound and that of \cite{balogh_bollobas_morris_2009} lies in the strength of the probability estimates obtained during the early stages of the process. Balogh, Bollob{\'a}s, and Morris first established that $\mathbb{P}(x\in A_2)\geq \frac{3}{4}$ and subsequently showed that $\mathbb{P}(x\notin A_5)=\exp(-\Theta(n))$. To apply a union bound over all vertices, they proved an additional lemma showing that $\mathbb{P}(x\notin A_{11})\leq \exp(-\Theta(n^2))$. In contrast, by combining Petrov's approximation, Gordon's ratio, and McKay's estimate, we obtain the stronger bound $\mathbb{P}(x\notin A_2)\leq n^{-\gamma}$. This sharper estimate allows us to adapt the arguments in \cite{balogh_bollobas_morris_2009} to establish that $\mathbb{P}(x\notin A_5)\leq \exp(-\Theta(n\log n))$, which is sufficiently small to apply a union bound over all $2^n$ vertices and complete the proof.

To prove the lower bound in Theorem~\ref{main theorem}, we introduce a more generous process in which the infection thresholds are strictly smaller than $r$ during the first few steps. We then show that, with high probability, this modified process stops after only a few steps, leaving many vertices uninfected. Since the modified process dominates the original process, it follows that the original process also fails to percolate with high probability.

The main difference between our proof and that of \cite{balogh_bollobas_morris_2009} lies in the construction of the more generous process. Let $B_i$ denote the set of vertices infected by step $i$ in this modified process. Specifically, we consider the first $k$ steps, where $k>\frac{2}{a}$, and choose the infection thresholds non-uniformly during these steps. The gaps between these thresholds play a crucial role in our argument and must be carefully chosen to ensure that Bernstein's inequality yields sufficiently strong exponential decay to overcome two union bounds. The first is taken over all possible witness chains, which, roughly speaking, are sequences of sets from $B_i\setminus B_{i-1}$, for $i\in[k]$, that witness the event $x\in B_{k+1}\setminus B_k$. The second is taken over all $2^n$ vertices of the hypercube.

\section{Preliminaries and Tools}

We  need the following multiplicative form of the Chernoff's bound. 
\begin{lemma} \label{lemma: chernoff}
Suppose $X_1, ..., X_n$ are independent random variables taking values in $\{0, 1\}$. Let $X$ denote their sum and define $\mu: = \mathbb{E}[X]$. Then for any $\delta > 0 $
$$\mathbb{P}(X \leq (1-\delta)\mu)\leq \exp\left(\frac{-\delta^2 \mu}{2} \right).$$
\end{lemma}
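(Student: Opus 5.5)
The plan is the standard exponential-moment (Cramér--Chernoff) argument applied to the lower tail. Write $X=\sum_{i=1}^n X_i$ with $X_i\in\{0,1\}$ independent and $p_i=\mathbb{P}(X_i=1)$, so that $\mu=\sum_i p_i$. If $\delta\ge 1$, then $(1-\delta)\mu\le 0$. For $\delta>1$ the event $\{X\le(1-\delta)\mu\}$ is empty, provided $\mu>0$. For $\delta=1$ a direct computation gives $\mathbb{P}(X\le 0)=\prod_i(1-p_i)\le e^{-\mu}\le e^{-\mu/2}$. We may therefore assume $0<\delta<1$.

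For $t>0$, Markov's inequality applied to $e^{-tX}$ gives
$$\mathbb{P}(X\le(1-\delta)\mu)\le e^{t(1-\delta)\mu}\,\mathbb{E}[e^{-tX}]=e^{t(1-\delta)\mu}\prod_{i=1}^n\bigl(1-p_i(1-e^{-t})\bigr).$$
Independence is used to factor the expectation. Next use $1-x\le e^{-x}$ to bound the product by $\exp\bigl(-\mu(1-e^{-t})\bigr)$. Now choose $t=-\log(1-\delta)>0$. This yields the classical bound
$$\mathbb{P}(X\le(1-\delta)\mu)\le\left(\frac{e^{-\delta}}{(1-\delta)^{1-\delta}}\right)^{\mu}=\exp\bigl(-\mu[\delta+(1-\delta)\log(1-\delta)]\bigr).$$

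The only remaining step, and the one requiring a small calculation, is the elementary inequality
$$\delta+(1-\delta)\log(1-\delta)\ge \frac{\delta^2}{2}\qquad (0<\delta<1).$$
To prove it, set $g(\delta)=\delta+(1-\delta)\log(1-\delta)-\delta^2/2$. Then $g(0)=0$ and $g'(\delta)=-\log(1-\delta)-\delta$. From the Taylor series $-\log(1-\delta)=\sum_{k\ge1}\delta^k/k\ge\delta$ we get $g'\ge0$, hence $g\ge0$ on $[0,1)$.

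Substituting this inequality into the previous display gives $\exp(-\delta^2\mu/2)$, which completes the proof. None of these steps poses a real obstacle; the only place needing care is the boundary case $\delta\ge1$, which was handled at the start.
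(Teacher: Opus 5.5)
Your proof is correct and complete. It is the standard exponential-moment derivation of the multiplicative Chernoff lower-tail bound, which the paper quotes as a known tool without proof, and your boundary cases and the inequality $\delta+(1-\delta)\log(1-\delta)\ge \delta^2/2$ are handled correctly (the one case you leave implicit, $\mu=0$, is trivial because the right-hand side is then $1$).
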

We also use the following lemma which was proved in \cite{balogh2007majority}.

\begin{lemma} \label{lemma: graph partition for hypercube}
 Let $n,k \in \mathbb{N}$, and $x \in V(Q_n)$. Then there exists a partition 
 $$S(x,k):=\{y \in V:d_H(y,x)=k\}=B_1 \cup \cdots \cup B_m$$
 of $S(x,k)$ into $m \leq k \binom{n}{k-1} \leq 2n^{k-1}$ sets, such that if $x,y \in B_j$ for some $j$, then $d_H(x,y) \geq 2k$. 
\end{lemma}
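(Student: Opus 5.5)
Translate the statement into set language and then greedily colour an ``intersection graph''. By the symmetry of $Q_n$ (translation by $x$) we may assume $x=\mathbf 0$. Then $S(x,k)$ is identified with the family $\binom{[n]}{k}$ of $k$-subsets of $[n]$, via $y\mapsto \{i: y_i=1\}$. For $A,B\in\binom{[n]}{k}$ we have $d_H(A,B)=|A\triangle B|=2(k-|A\cap B|)$. Hence $d_H(A,B)\ge 2k$ holds if and only if $A\cap B=\emptyset$. So the statement is equivalent to partitioning $\binom{[n]}{k}$ into at most $k\binom{n}{k-1}$ families, each consisting of pairwise disjoint $k$-sets. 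The trivial cases are: $k=0$, where $S(x,0)=\{x\}$ is a single class and the condition is vacuous; and $k>n$, where the sphere is empty.

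\textbf{Reduction to graph colouring.} Define an auxiliary graph $H$ on vertex set $\binom{[n]}{k}$, with $A\sim B$ whenever $A\ne B$ and $A\cap B\neq\emptyset$. A proper colouring of $H$ is exactly a partition of the required kind. Its colour classes are the sets $B_j$, and any two distinct members of one class are disjoint, so they lie at Hamming distance $2k$.

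\textbf{Degree bound.} Every $k$-set $B$ meeting a fixed $A$ contains some element $i\in A$. For each $i$ there are $\binom{n-1}{k-1}$ such sets $B\ni i$, and $A$ itself is one of them. Summing over the $k$ elements of $A$ therefore gives
$$\Delta(H)\le k\binom{n-1}{k-1}-1.$$
The greedy colouring then uses at most
$$\Delta(H)+1\le k\binom{n-1}{k-1}\le k\binom{n}{k-1}$$
colours. The last inequality is Pascal's rule, $\binom{n}{k-1}=\binom{n-1}{k-1}+\binom{n-1}{k-2}$.

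\textbf{Final numerical inequality.} It remains to check $k\binom{n}{k-1}\le 2n^{k-1}$. Since $\binom{n}{k-1}\le n^{k-1}/(k-1)!$, it suffices to have $k/(k-1)!\le 2$. This holds for every $k\ge1$: the values are $1,2,3/2$ for $k=1,2,3$, and the ratio decreases from then on.

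I do not expect a genuine obstacle here. The only points needing care are the translation of Hamming distance into set intersections, and making sure the degree count gives the stated constant $k\binom{n}{k-1}$ rather than something slightly larger; the subtraction of $A$ itself takes care of the latter.
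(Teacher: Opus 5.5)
Your proof is correct for $k\ge 1$. The paper does not prove this lemma; it imports it from Balogh, Bollob\'as and Morris. Your route, translating to pairwise disjoint $k$-subsets of $[n]$ and then greedily colouring the intersection graph of maximum degree at most $k\binom{n-1}{k-1}-1$, is a complete and natural way to obtain the quoted bound $k\binom{n}{k-1}\le 2n^{k-1}$.

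One small correction: $k=0$ is not a trivially true case. There $S(x,0)=\{x\}$ needs one class, but the stated bound gives $m\le 0\cdot\binom{n}{-1}=0$, so the lemma must be read with $k\ge 1$.
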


We need to approximate the binomial distribution very accurately. The following three lemmas will play a crucial role in our analysis. 

\begin{lemma} (Petrov's approximation) Chapter: Probabilities of Large Deviations in \cite{Petrov1975} \label{lemma: Petrov}

 Let $V_{N}^2=\sum_{i=1}^{N} \Var(X_{_i})$ and $S_{N}=\sum_{i=1}^{N} X_{i}$, where the centered random variables $X_{i}$  are independent with a common distribution. Assume that there exists a constant $h>0$, independent of $i$, such that 
$$\sup_{i}\mathbb{E}e^{tX_{i}}<\infty,$$
in the interval $|t| \leq h$. 
Then we have for $x \geq 0$ and $x=o(\sqrt{N})$, 
$$\mathbb{P}(S_{N} \geq x V_{N})=\left(1-\Phi(x) \right)  \exp\left(\frac{x^3}{\sqrt{N}} \lambda \left(\frac{x}{\sqrt{N}} \right) \right) \left(1+O\left(\frac{1+x}{\sqrt{N}} \right) \right),$$
where $\Phi(x)$ denotes the cumulative distribution function of a standard normal random variable,i.e.,
$$\Phi(x) =\frac{1}{\sqrt{2\pi}}\int_{-\infty}^{x} e^{-t^2/2}dt,$$
and 
$\lambda(t)=\sum_{k=0}^{\infty} a_kt^k$ is a power series with coefficients depending on the cumulants of the random variable $X_i$ which converges for sufficiently small values of $|t|$. 
\end{lemma}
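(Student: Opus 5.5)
This is the classical Cramér-type large deviation theorem, and I would prove it by the standard \emph{conjugate (Esscher) distribution} method. Let $\sigma^2=\Var(X_1)$, so $V_N=\sigma\sqrt N$, and let $K(t)=\log \mathbb{E}e^{tX_1}$ be the cumulant generating function. By the hypothesis $K$ is analytic on $|t|<h$, with $K(0)=K'(0)=0$ and $K''(0)=\sigma^2$. Set $s=x\sigma\sqrt N$, the target level for $S_N$. Define $t=t(x)$ by the saddle-point equation $NK'(t)=s$, that is $K'(t)=x\sigma/\sqrt N$. Since $x/\sqrt N\to 0$, the inverse function theorem gives a unique small solution. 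It has a convergent expansion $t=\frac{x}{\sigma\sqrt N}\bigl(1+c_1\tfrac{x}{\sqrt N}+\cdots\bigr)$, and in particular $t=O(x/\sqrt N)$.

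Next I would tilt. Let $\widetilde X_i$ have law $d\widetilde F(y)=e^{ty-K(t)}dF(y)$ and let $\widetilde S_N=\sum_i \widetilde X_i$. Then
$$\mathbb{P}(S_N\ge s)=e^{NK(t)-ts}\,\mathbb{E}\bigl[e^{-t(\widetilde S_N-s)}\mathbbm{1}\{\widetilde S_N\ge s\}\bigr].$$
Under the tilted law, $\mathbb{E}\widetilde S_N=s$ and $\Var \widetilde S_N=NK''(t)=N\sigma^2(1+O(t))$. The third absolute moments are uniformly bounded for small $t$, by analyticity of the moment generating function. The exponent $N(K(t)-tK'(t))$ expands as a power series in $x/\sqrt N$. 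Collecting terms gives
$$N\bigl(K(t)-tK'(t)\bigr)=-\frac{x^2}{2}+\frac{x^3}{\sqrt N}\lambda\!\left(\frac{x}{\sqrt N}\right),$$
where $\lambda$ is the Cramér series. Its coefficients are polynomials in the cumulants $\kappa_3,\kappa_4,\dots$ of $X_1$ divided by powers of $\sigma$; for example $a_0=\kappa_3/(6\sigma^3)$. This step is bookkeeping with power series; convergence for small argument follows from analyticity of $K$.

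It remains to evaluate $I=\mathbb{E}\bigl[e^{-tW}\mathbbm{1}\{W\ge 0\}\bigr]$, where $W=\widetilde S_N-s$ is centered. Write $\tau=t\sqrt{NK''(t)}$, so $\tau=x(1+O(x/\sqrt N))$. Integrating by parts against the distribution function $G_N$ of the normalized tilted sum gives
$$I=\int_0^\infty \tau e^{-\tau u}\bigl(G_N(u)-G_N(0)\bigr)\,du.$$
I would compare this with the same integral with $\Phi$ in place of $G_N$. By the Berry--Esseen theorem (uniform here because the tilted third moments are bounded), the difference is $O(1/\sqrt N)$. The Gaussian integral equals $\psi(\tau):=e^{\tau^2/2}(1-\Phi(\tau))$, which is of order $1/(1+\tau)$.

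The main obstacle is obtaining a \emph{relative} error $O((1+x)/\sqrt N)$ rather than an absolute error $O(1/\sqrt N)$. When $x$ is large, $\psi(\tau)\asymp 1/x$, so a plain Berry--Esseen bound only yields relative error $O(x/\sqrt N)$. That is fine, since it is dominated by $(1+x)/\sqrt N$. I would make this precise by splitting into the cases $x\le 1$ and $x>1$. In the second case, the factor $\tau e^{-\tau u}$ localizes the integral to $u\lesssim 1/\tau$, and I would use the smoothed (Esseen) form of the error to bound the discrepancy by $O(\tau\cdot N^{-1/2}\cdot\tau^{-1})$ relative to $\psi(\tau)\asymp\tau^{-1}$. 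Finally, I would replace $\tau$ by $x$ and absorb the correction into the exponent. Note that $1-\Phi(x)=e^{-x^2/2}\psi(x)$ and $\psi(\tau)/\psi(x)=1+O(|\tau-x|)=1+O(x^2/\sqrt N\cdot x^{-1})$. The discrepancy between $\tau^2/2$ and $x^2/2$ is itself a power series in $x/\sqrt N$ times $x^3/\sqrt N$, so it merges into $\lambda$ after consistently redefining the coefficients. Multiplying the prefactor $e^{-x^2/2+x^3\lambda/\sqrt N}$ by $\psi(x)(1+O((1+x)/\sqrt N))$ yields the stated formula.
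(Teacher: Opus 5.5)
The paper gives no proof of this lemma; it quotes it from Petrov's book. Your argument is the classical Cram\'er conjugate-distribution proof used there: tilt at the saddle point $NK'(t)=x\sigma\sqrt N$, use the exact identity $N(K(t)-tK'(t))=-\frac{x^2}{2}+\frac{x^3}{\sqrt N}\lambda\bigl(\frac{x}{\sqrt N}\bigr)$, and compare the tilted sum with the normal law via Berry--Esseen. That already gives relative error $O((1+x)/\sqrt N)$ because $\psi(\tau)\asymp (1+x)^{-1}$, so the proposal is correct and no smoothing step is needed.

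One correction: drop the remark about merging the discrepancy between $\tau^2/2$ and $x^2/2$ into $\lambda$. The exponent contains no $\tau$, and all $\tau$-dependence sits in $\psi(\tau)$, which you already control by $\psi(\tau)/\psi(x)=1+O(x/\sqrt N)$. This comes from $|\psi'/\psi|\lesssim (1+y)^{-1}$, not from $1+O(|\tau-x|)$ as written. Redefining $\lambda$ that way would double-count and in general multiply the right-hand side by a factor $e^{\Theta(x^4/N)}$, which is not $1+O((1+x)/\sqrt N)$ once $x\gg N^{1/6}$.
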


\begin{lemma} (Gordon's ratio) \cite{Gordon1941} \label{lemma:Gordon}

For $x >0$, we have 
$$\frac{x}{1+x^2}\phi(x) \leq 1- \Phi(x) \leq \frac{\phi(x)}{x},$$
where $\phi(x)=\frac{1}{\sqrt{2\pi}}e^{-x^2/2}$. 
    
\end{lemma}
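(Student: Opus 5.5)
The plan is to prove both inequalities by elementary calculus, using only the identity $\phi'(t)=-t\phi(t)$ and the facts that $1-\Phi(x)=\int_x^\infty \phi(t)\,dt$ and $\phi(t)\to 0$ as $t\to\infty$. No earlier result of the paper is needed.

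\emph{Upper bound.} For $t\ge x>0$ we have $1\le t/x$, so
$$1-\Phi(x)=\int_x^\infty \phi(t)\,dt\le \frac1x\int_x^\infty t\phi(t)\,dt=\frac1x\bigl[-\phi(t)\bigr]_x^\infty=\frac{\phi(x)}{x}.$$

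\emph{Lower bound.} Define $g(x)=1-\Phi(x)-\frac{x}{1+x^2}\phi(x)$ for $x>0$. We will show that $g$ is strictly decreasing with $\lim_{x\to\infty}g(x)=0$, which forces $g>0$ on $(0,\infty)$.

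The limit is immediate, since both $1-\Phi(x)$ and $\frac{x}{1+x^2}\phi(x)$ tend to $0$.

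For the derivative, use the product rule together with $\phi'=-x\phi$:
$$\frac{d}{dx}\Bigl(\frac{x}{1+x^2}\phi(x)\Bigr)=\phi(x)\Bigl(\frac{1-x^2}{(1+x^2)^2}-\frac{x^2}{1+x^2}\Bigr)=\phi(x)\,\frac{1-2x^2-x^4}{(1+x^2)^2}.$$
Since $\frac{d}{dx}\bigl(1-\Phi(x)\bigr)=-\phi(x)$, it follows that
$$g'(x)=-\phi(x)\,\frac{(1+x^2)^2+1-2x^2-x^4}{(1+x^2)^2}=-\frac{2\phi(x)}{(1+x^2)^2}<0.$$
Hence $g$ is strictly decreasing. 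Because it tends to $0$ at infinity, $g(x)>0$ for every $x>0$, which is the claimed lower bound.

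An equivalent route is to integrate $(1+t^{-2})\phi(t)$ from $x$ to $\infty$, which gives exactly $\phi(x)/x$. Then bound $1+t^{-2}\le 1+x^{-2}$ for $t\ge x$ and rearrange to get the lower bound.

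There is no real obstacle here. The only step needing care is the algebra in $g'(x)$, specifically checking that the numerator collapses to the constant $2$.
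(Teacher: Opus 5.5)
Your proof is correct. The upper bound follows from $\phi(t)\le (t/x)\phi(t)$ on $[x,\infty)$, and your derivative computation $g'(x)=-2\phi(x)/(1+x^2)^2<0$ is right; together with $g(x)\to 0$ it gives the lower bound. The alternative identity $\int_x^\infty (1+t^{-2})\phi(t)\,dt=\phi(x)/x$ is also valid.

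The paper gives no proof of this lemma; it simply quotes it from \cite{Gordon1941}. Your argument therefore replaces the citation with a self-contained elementary derivation rather than offering a competing route. It also immediately gives what the paper actually uses later, namely $(1-\Phi(x))/\phi(x)=(1+o(1))/x$ as $x\to\infty$, since $x^2/(1+x^2)\to 1$.
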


\begin{lemma} (McKay's estimate)\cite{Brendan} \label{Lemma:McKay}

Let $0< p <1$ , $n \geq 1$, and $pn \leq k \leq n$. Define $x=\frac{k-pn}{\sigma}$ and $\sigma^2=\text{Var}(\text{Binomial}(n,p))$. Then 
$$B(k;n,p)=\sigma b(k-1;n-1,p)Y(x) \exp(E(k;n,p)/\sigma),$$
where 
$$B(k;n,p)=\sum_{j=k}^nb(j;n,p), \quad
b(k;n,p)=\binom{n}{k}p^k(1-p)^{n-k},$$
$$Y(x)=\frac{1-\Phi(x)}{\phi(x)}, \quad 0 \leq E(k;n,p) \leq \min\{\frac{1}{x},\sqrt{\frac{\pi}{8}}\}.$$
\end{lemma}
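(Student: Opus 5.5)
This lemma is quoted from \cite{Brendan}, so in the paper it is only cited. Were one to reprove it, I would start from the exact incomplete-beta representation of the binomial tail and compare the resulting integral with the integral formula for the Mills ratio $Y(x)$.

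\textbf{Step 1: exact integral form.} Differentiating in $p$ and telescoping gives, for $1\le k\le n$,
$$B(k;n,p)=k\binom{n}{k}\int_0^p t^{k-1}(1-t)^{n-k}\,dt .$$
Since $k\binom nk=n\binom{n-1}{k-1}$, dividing by $b(k-1;n-1,p)=\binom{n-1}{k-1}p^{k-1}(1-p)^{n-k}$ yields
$$\frac{B(k;n,p)}{b(k-1;n-1,p)}=n\int_0^p \exp\bigl(-g(t)\bigr)\,dt,$$
where
$$g(t)=(k-1)\log\frac{p}{t}+(n-k)\log\frac{1-p}{1-t}.$$
The function $g$ is convex on $(0,p]$ and satisfies $g(p)=0$. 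Its slope at $p$ is
$$g'(p)=\frac{n-k}{1-p}-\frac{k-1}{p}=\frac{np-k+1-p}{p(1-p)}=-\frac{n(x\sigma-(1-p))}{\sigma^2}.$$

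\textbf{Step 2: normalise to the Mills-ratio scale.} Substitute $t=p-\sigma u/n$, so that $n\,dt=\sigma\,du$ and $u$ ranges over $[0,np/\sigma]$. The left-hand side becomes
$$\sigma\int_0^{np/\sigma}e^{-h(u)}\,du,\qquad h(u)=g(p-\sigma u/n).$$
We have $h(0)=0$ and $h'(0)=x-(1-p)/\sigma$. Moreover $h''(u)=(\sigma/n)^2 g''(t)$, and
$$g''(t)=\frac{k-1}{t^2}+\frac{n-k}{(1-t)^2}$$
is close to $n^2/\sigma^2$ near $t=p$. Thus $h(u)\approx xu+u^2/2$, which is exactly the exponent in
$$Y(x)=\int_0^\infty e^{-xu-u^2/2}\,du .$$
The claimed identity is therefore equivalent to
$$\int_0^{np/\sigma}e^{-h(u)}\,du=Y(x)\,e^{E/\sigma}\qquad\text{with } 0\le E\le\min\Bigl\{\frac1x,\sqrt{\frac\pi8}\Bigr\}.$$

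\textbf{Step 3: two-sided comparison.} For the lower bound $E\ge 0$, I would show $h(u)\le xu+u^2/2$ on the whole range. The plan is to compare derivatives: check $h'(u)\le x+u$ for all $u$, which reduces to an elementary inequality between $g'$ and the linear function, using the explicit form of $g'$. The truncation at $u=np/\sigma$ must also be handled, since $h\to\infty$ there while the Gaussian integrand does not vanish; I would absorb this by proving the pointwise inequality in the direction that also covers the missing tail, or by working with the equivalent sum form.

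For the upper bound, the key is that the only discrepancy between $h$ and the quadratic is the shift $(1-p)/\sigma$ in $h'(0)$ together with a controlled third-order term. This should give
$$h(u)\ge xu+\frac{u^2}{2}-\frac{cu}{\sigma},\qquad 0\le c\le 1,$$
and hence
$$\int e^{-h}\le \int e^{-xu-u^2/2}\,e^{u/\sigma}\,du .$$

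\textbf{Step 4: bound the exponent.} Jensen-type reasoning bounds the last ratio by $\exp\bigl(\langle u\rangle/\sigma\bigr)$ up to convexity corrections, where $\langle u\rangle$ is the mean of $u$ under the density proportional to $e^{-xu-u^2/2}$ on $[0,\infty)$. That mean equals $1/Y(x)-x$. Standard Mills-ratio inequalities (the refined form of Lemma~\ref{lemma:Gordon}) give $1/Y(x)-x\le 1/x$. The mean is also largest at $x=0$, where it equals $\sqrt{2/\pi}$; some care gives the constant $\sqrt{\pi/8}$.

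\textbf{Main obstacle.} I expect the hardest part to be this final step: obtaining the sharp, uniform constant in $E\le\min\{1/x,\sqrt{\pi/8}\}$ while simultaneously controlling the higher-order (cubic) deviation of $h$ from the quadratic. The lower bound $E\ge0$ should be routine convexity.
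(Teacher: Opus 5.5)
The paper gives no proof of this lemma (it is only cited from McKay), so your plan has to stand on its own. Steps 1--2 are correct. Both pointwise comparisons in Step 3 are false, however, and not only near the truncation point you flagged. They fail on sets of Gaussian mass bounded below, by amounts of order $1/\sigma$, which is exactly the order of the quantity $E/\sigma$ you want to control. Expanding $g$ about $t=p$ gives, for fixed $p$ and bounded $x,u$,
\[
h(u)=xu+\frac{u^2}{2}+\frac1\sigma\Bigl(-(1-p)u+\frac{(1-2p)x}{2}u^2+\frac{1-2p}{3}u^3\Bigr)+O(\sigma^{-2}).
\]
For $p<1/2$ the bracket is positive once $u^2>3(1-p)/(1-2p)$. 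So $h(u)\le xu+u^2/2$, and a fortiori $h'(u)\le x+u$, fails in the bulk, e.g.\ at $p=0.1$, $u=3$. For $p>1/2$ and any $c\le 1$,
\[
h(u)-xu-\frac{u^2}{2}+\frac{cu}{\sigma}\le\sigma^{-1}\Bigl(pu-\frac{2p-1}{3}u^3\Bigr)+O(\sigma^{-2}),
\]
which is negative once $u^2>3p/(2p-1)$, e.g.\ at $p=0.9$, $u=3$. The sign of $E$ is therefore decided only after integration. To first order,
\[
E=(1-p)m_1-(1-2p)\Bigl(\frac x2m_2+\frac13m_3\Bigr),
\]
where $m_j$ are the moments of the density proportional to $e^{-xu-u^2/2}$ on $[0,\infty)$. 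Its nonnegativity only becomes visible after integrating by parts ($xm_1+m_2=1$, $xm_2+m_3=2m_1$); for instance its $p\to0$ limit is $m_3/6$. So $E\ge0$ is not routine convexity: the negative shift $-(1-p)u/\sigma$ beats the cubic term only on average.

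Step 4 cannot produce the stated constants either. Jensen runs the wrong way, since $\mathbb{E}e^{u/\sigma}\ge e^{\mathbb{E}u/\sigma}$. In fact the comparison integral is explicit: $\int_0^\infty e^{-xu-u^2/2+cu/\sigma}\,du=Y(x-c/\sigma)$. Since $-\frac{d}{dy}\log Y(y)=1/Y(y)-y=:m_1(y)$ is decreasing, the bound on $E$ you would obtain, $\sigma\log\frac{Y(x-c/\sigma)}{Y(x)}$, is at least $c\,m_1(x)$.

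Validity of your lower bound on $h$ near $u=0$ forces $c\ge 1-p$, because $h'(0)=x-(1-p)/\sigma$. Hence at $x=0$ your bound is at least $(1-p)\sqrt{2/\pi}$, which exceeds $\sqrt{\pi/8}$ whenever $p<1-\pi/4$. With your $c=1$ it is at least $\sqrt{2/\pi}\approx0.80>\sqrt{\pi/8}\approx0.63$. For large $x$ the natural output is $c\,m_1(x-c/\sigma)\le c/(x-c/\sigma)$ rather than $1/x$.

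Reaching $\sqrt{\pi/8}$ requires keeping the cubic term; at $x=0$ the true first-order value is $(1+p)\sqrt{2/\pi}/3$. The lemma is an exact inequality for every $n\ge1$, including small $\sigma$. So this must be done with exact inequalities for the two logarithms in $g$, not a Taylor expansion with a ``controlled'' remainder.

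The paper uses the lemma only in Claim~\ref{claim: Cov}, which needs just $E/\sigma=o(1)$ with $\sigma\to\infty$ and $x=O(\sqrt{\log n})$. For that, a Laplace-method analysis of your integral from Steps 1--2 should suffice, but it is much weaker than the lemma as stated.
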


The Chebyshev's inequality is also important for us. 
\begin{lemma} (Chebyshev's inequality) \label{lemma:Chebyshev}

  Let $X$ (integrable) be a random variable with finite non-zero variance $\sigma^2$ and finite expected value $\mu$. Then for any real number $a >0$, 
$$\mathbb{P}(|x-\mu| \geq a) \leq \frac{\sigma^2}{a^2}.$$  
\end{lemma}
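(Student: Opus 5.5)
The statement immediately above is Chebyshev's inequality (Lemma~\ref{lemma:Chebyshev}), so the plan is a one-line reduction to Markov's inequality applied to the square of the deviation. Set $Y=(X-\mu)^2$. This is a nonnegative random variable, and since $X$ has finite variance, $\mathbb{E}[Y]=\sigma^2<\infty$. For $a>0$ the events $\{|X-\mu|\ge a\}$ and $\{Y\ge a^2\}$ coincide, because $t\mapsto t^2$ is strictly increasing on $[0,\infty)$. (The statement writes $|x-\mu|$; this is read as the random variable $X$.)

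Next I would prove Markov's inequality in the form needed, in case it is not taken as known. For nonnegative $Y$ and $c>0$ we have the pointwise bound
$$c\,\mathbbm{1}_{\{Y\ge c\}}\le Y.$$
Taking expectations and using monotonicity of expectation gives $c\,\mathbb{P}(Y\ge c)\le \mathbb{E}[Y]$. Choosing $c=a^2$ then yields
$$\mathbb{P}(|X-\mu|\ge a)=\mathbb{P}(Y\ge a^2)\le \frac{\mathbb{E}[(X-\mu)^2]}{a^2}=\frac{\sigma^2}{a^2},$$
which is exactly the claim.

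There is no real obstacle here. The only points to check are that the pointwise bound holds on both the event $\{Y\ge c\}$ and its complement, which uses nonnegativity of $Y$, and that $\mathbb{E}[Y]$ exists. The hypotheses give the latter directly, and the assumption that the variance is non-zero is not actually needed.
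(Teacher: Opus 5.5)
Your proof is correct: it is the standard reduction to Markov's inequality applied to the nonnegative variable $(X-\mu)^2$, and you are right that the non-zero variance hypothesis is not needed. The paper gives no proof and simply quotes Chebyshev's inequality as a standard tool, so your textbook argument covers everything required.
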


The following lemma is important for us to prove the lower bound of Theorem ~\ref{main theorem}.
\begin{lemma} Bernstein's Inequality \label{lemma: Bernstein}

Let $X_1,X_2,\cdots,X_n$ be independent zero-mean random variables. Suppose that $|X_i| \leq M$ almost surely for all $i \in [n]$. Then for all $t >0$, 
$$\mathbb{P}\left(\sum_{i=1}^{n}X_i \geq t\right) \leq \exp\left(-\frac{\frac{1}{2}t}{\sum_{i=1}^n \mathbb{E}(X_i^2)+\frac{1}{3}Mt} \right).$$

\end{lemma}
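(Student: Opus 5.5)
We will derive the stated bound from the classical form of Bernstein's inequality,
$$\mathbb{P}\Big(\sum_{i=1}^n X_i\ge t\Big)\le \exp\left(-\frac{\tfrac12 t^2}{\sigma^2+\tfrac13 Mt}\right),\qquad \sigma^2:=\sum_{i=1}^n\mathbb{E}(X_i^2).$$
For $t\ge 1$ we have $t^2\ge t$, so the classical exponent dominates the one in Lemma~\ref{lemma: Bernstein}, and the lemma follows. The restriction $t\ge1$ cannot be removed. Take $n=1$, $M=0.01$, and $X_1=\pm M$ with probability $1/2$ each, so $\sigma^2=10^{-4}$. With $t=M/2$ the left side equals $1/2$, while the stated right side is about $e^{-21}$. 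We therefore read the lemma as applying for $t\ge 1$. This covers its use in the lower bound, where $t$ is a deviation of order a power of $n$ in the count of newly infected neighbours.

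For the classical bound we use the exponential moment method. Fix $\theta\in(0,3/M)$. Markov's inequality and independence give
$$\mathbb{P}\Big(\sum_i X_i\ge t\Big)\le e^{-\theta t}\prod_i\mathbb{E}e^{\theta X_i}.$$
To bound each factor, expand $e^{\theta X_i}$ and use $\mathbb{E}X_i=0$ together with $|X_i|^k\le M^{k-2}X_i^2$ for $k\ge2$:
$$\mathbb{E}e^{\theta X_i}\le 1+\mathbb{E}(X_i^2)\sum_{k\ge2}\frac{\theta^kM^{k-2}}{k!}.$$
Since $k!\ge 2\cdot 3^{k-2}$, the series is at most $\dfrac{\theta^2/2}{1-\theta M/3}$. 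Applying $1+u\le e^u$ and multiplying over $i$ yields
$$\mathbb{P}\Big(\sum_i X_i\ge t\Big)\le \exp\left(-\theta t+\frac{\theta^2\sigma^2/2}{1-\theta M/3}\right).$$

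The only step requiring care is the choice of $\theta$. We take
$$\theta=\frac{t}{\sigma^2+Mt/3},$$
which indeed satisfies $\theta<3/M$. With this choice $1-\theta M/3=\sigma^2/(\sigma^2+Mt/3)$, so the second term in the exponent equals $\tfrac12\theta t$. The total exponent is then $-\tfrac12\theta t=-\tfrac12 t^2/(\sigma^2+Mt/3)$, which is the classical bound. Combined with the reduction for $t\ge1$, this proves Lemma~\ref{lemma: Bernstein} in that range.
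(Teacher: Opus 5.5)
Your proof is correct. The paper gives no proof of this lemma; it simply quotes the standard Bernstein inequality. Your exponential-moment argument, using $k!\ge 2\cdot 3^{k-2}$ and the choice $\theta=t/(\sigma^2+Mt/3)$, is the textbook derivation of the classical bound.

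Your counterexample is also right: the right-hand side is about $e^{-21.4}$, so the displayed statement is false as written. The better diagnosis, however, is a typo, $\tfrac12 t$ written for $\tfrac12 t^2$, rather than a missing hypothesis $t\ge 1$. The paper's only application of the lemma, in the proof of Lemma~\ref{lemma: B_k B_k+1}, writes the bound as $\exp\{-u^2/(2(V+(k+1)u/3))\}$, which is the classical $t^2$ form.

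There the literal statement, even restricted to $t\ge1$, would be useless. With $V\le(1+o(1))rs$ and $u=(1-o(1))\lambda' s$, it gives an exponent of order $\lambda'/r=o(1)$. The $t^2$ form instead gives the needed $(\tfrac{C'^2}{2}-o(1))s\log n$, which must beat the union bound over witness chains. So your remark that the $t\ge1$ reading ``covers its use in the lower bound'' is misleading. What the paper actually relies on is the classical inequality you prove in your second paragraph, not the $t\ge 1$ reduction.

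One small edge case: the claim $\theta<3/M$ needs $\sigma^2>0$. If $\sigma^2=0$, the sum is almost surely $0$ and the bound holds trivially for $t>0$.
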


\section{Upper Bound}
For the ease of notations, from now on by $c$ we denote a constant which does not depend on $n$. We will use $c$ as generic notations, where the specific forms may be different in different expressions. 

In this section, we establish the upper bound in Theorem~\ref{main theorem}. Our proof strategy is similar to that of \cite{balogh_bollobas_morris_2009}. We first show that $\mathbb{P}(x\notin A_2)\leq n^{-\gamma}$ for some constant $\gamma>0$. We then analyze the evolution of the process over a few additional steps to obtain the stronger bound $\mathbb{P}(x\notin A_5)\leq \exp(-\Theta(n\log n))$. This estimate allows us to apply a union bound over all $2^n$ vertices of the hypercube to complete the proof.

\begin{lemma}
Let the initial infection probability $p$ be $\frac{r}{n}-\left(\sqrt{2-a}-\epsilon \right)\frac{\sqrt{r \log n}}{n}$ and $x$ be a vertex in $V(Q_n)$. Then we have 
$$\mathbb{P}(x \notin A_2) \leq n ^{-\gamma},$$
where $\gamma $ is a positive constant. 
\end{lemma}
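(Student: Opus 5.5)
Set $\lambda = (\sqrt{2-a}-\epsilon)\sqrt{r\log n}$, so that $np = r-\lambda$. Fix $x$ and condition on $X_0 := |\mathcal N(x)\cap A_0|$.

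\emph{Step 1: $x$ is not too far below its mean.} By Chebyshev's inequality (Lemma~\ref{lemma:Chebyshev}), or by Lemma~\ref{lemma: chernoff}, we have
\[
X_0 \ge r-\lambda-\sqrt{r}\,n^{\gamma/2}
\]
with probability at least $1-n^{-\gamma}$. Taking $\gamma$ small, the deficit satisfies $D := r - X_0 \le \lambda + r^{1/2}n^{\gamma/2}$. It therefore suffices to show that, with probability $1-O(n^{-\gamma})$, at least $D$ neighbours of $x$ lie in $A_1\setminus A_0$; then $x\in A_2$.

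\emph{Step 2: each neighbour is newly infected with a good probability.} For $y\in\mathcal N(x)$, consider the event $E_y$ that $y\notin A_0$ and $y$ has at least $r$ infected neighbours among $\mathcal N(y)\setminus\{x\}$. This event ignores $x$ and thus only helps a lower bound. The count involved is $\mathrm{Bin}(n-1,p)$, which is independent of $X_0$ apart from the state of $y$ itself; we handle this by conditioning on which neighbours of $x$ are in $A_0$. Writing $z=\lambda/\sqrt{r(1-p)}\approx\sqrt{(2-a)\log n}(1-\epsilon')$, we have $z=o(n^{1/6})$ and $z=o(\sqrt n)$. Lemma~\ref{lemma: Petrov}, with the cubic correction $z^3/\sqrt{r}\to0$ because $\log^{3/2} n/n^{a/2}\to 0$, together with Lemma~\ref{lemma:Gordon}, gives
\[
q := \mathbb P(E_y) \ge (1-o(1))\frac{1}{\sqrt{2\pi}\,z}e^{-z^2/2} \ge n^{-(1-a/2)(1-\epsilon')^2} / \mathrm{polylog}(n).
\]
Lemma~\ref{Lemma:McKay} can replace Petrov if uniformity in the error terms is needed. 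Setting $W=\sum_{y\in\mathcal N(x)\setminus A_0} \mathbf 1_{E_y}$, we get
\[
\mathbb E W \ge (n-X_0)q \ge n^{a/2+\delta}
\]
for some $\delta=\delta(\epsilon,a)>0$. This exceeds $D=O(\sqrt{r\log n})=n^{a/2+o(1)}$ by a polynomial factor.

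\emph{Step 3: concentration via Chebyshev.} The events $E_y$ and $E_{y'}$ for distinct $y,y'\in\mathcal N(x)$ depend on neighbourhoods that overlap only in $x$ and in the two common neighbours of $y$ and $y'$ other than $x$. Conditioning on these few shared vertices changes $q$ by at most a factor $1+O(z/\sqrt r)$ from a shift of $O(1)$ in the threshold. Hence
\[
\mathrm{Cov}(\mathbf 1_{E_y},\mathbf 1_{E_{y'}}) = O(q^2 z/\sqrt r),
\]
and so
\[
\Var W \le \mathbb E W + n^2 q^2 O(z/\sqrt r) = \mathbb E W + (\mathbb E W)^2 n^{-a/2+o(1)}.
\]
Chebyshev's inequality then gives
\[
\mathbb P(W < \mathbb E W/2) \le 4/\mathbb E W + n^{-a/2+o(1)} \le n^{-\gamma}.
\]
Combining this with Step 1 shows that $X_0+W\ge r$, and hence $x\in A_2$, outside an event of probability at most $2n^{-\gamma}$. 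Adjusting $\gamma$ completes the argument.

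The main obstacle is Step 3. One must carefully bound the covariance created by the shared neighbours, in particular the sensitivity of the binomial tail to a unit shift of the threshold, $b(r;n,p)/B(r;n,p)\approx z/\sqrt r$, which follows from Lemma~\ref{Lemma:McKay}. One must also keep the conditioning on $X_0$ consistent with this covariance bound.
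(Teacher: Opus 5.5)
Your proposal is correct and takes essentially the same route as the paper: Chebyshev on the number of neighbours of $x$ newly infected in round one, with the pairwise covariance controlled through the single shared vertex $w\in S(x,2)$ and the McKay--Gordon estimate $b(r)/B(r)\approx z/\sqrt r$, combined with concentration of $|\mathcal{N}(x)\cap A_0|$. Your covariance bound $O(q^2z/\sqrt r)$ is cruder than the paper's exact $p(1-p)(q_1-q_0)^2$ but suffices because $z/\sqrt r=n^{-a/2+o(1)}$. Two small corrections: two neighbours of $x$ share exactly one common neighbour besides $x$ (not two), and your Step 1 deficit is $n^{a/2+\gamma/2+o(1)}$ rather than $O(\sqrt{r\log n})$, which is harmless once $\gamma<2\delta$.
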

\begin{proof}
The main idea in this proof is as follows. Let $x \in V(Q_n)$ and without loss of generality we can assume that $x=[0]^n$. We prove there are enough number of newly infected vertices in $\mathcal{N}(x)$ in the first step to bridge the gap between $r$ and $np$. 

First we will show that $\sum_{y \in \mathcal{N}(x)} \mathbb{I}_{\{|\mathcal{N}(y) \in A_0| \geq r\}}$ is concentrated on its mean by using Chebyshev's inequality. Thus we need to analyze its mean and variance.

We can assume that $x \notin A_0$ because otherwise there is nothing to prove. Let $y,z \in \mathcal{N}(x)$ and $y \neq z$. Since $\mathcal{N}(x) \subset S(x,1)$, $y,z \in S(x,1)$. Now consider $\mathcal{N}(y)$ and $\mathcal{N}(z)$. we have 
$$\mathcal{N}(y) \cap \mathcal{N}(z)=\{x,w\}, $$
where $w \in S(x,2)$. 

Note that since $x \notin A_0$, $|\mathcal{N}(y) \cap A_0| \sim \text{Binomial}(n-1,p)$. Let $X:=|\mathcal{N}(y) \cap A_0|$. We have 
$\mu=\mathbb{E}(X)=(n-1)p$ and $\sigma^2=\text{Var}(X)=(n-1)p(1-p)$. 

Let $\lambda:=(\sqrt{2-a}-\epsilon)\sqrt{r \log n}$. Since $np=r-\lambda$, then $\mu=r-\lambda-p$ and thus $r-\mu=\lambda+p$. Since $r=o(n)$, $\sigma^2=(1+o(1))r$. Let $C=\sqrt{2-a}-\epsilon $. Define $$b:= \frac{r-\mu}{\sigma}=C \sqrt{\log n}(1+o(1)).$$ 

By Lemmas \ref{lemma: Petrov} and \ref{lemma:Gordon}, we have 
$$\mathbb{P}(|\mathcal{N}(y) \cap A_0| \geq r)=\frac{1+o(1)}{\sqrt{2\pi}b}\exp\left(\frac{-b^2}{2}\right)=n^{\frac{-C^2}{2}+o(1)}$$
Let $I_y:=\mathbb{I}_{\{|\mathcal{N}(y) \cap A_0| \geq r\}}$ and $I_z:=\mathbb{I}_{\{|\mathcal{N}(z) \cap A_0| \geq r\}}$. Now we need to estimate $\text{Cov}(I_y,I_z)$. 

We have $|\mathcal{N}(y)\cap A_0|= V_{w}+U_y$ and  $|\mathcal{N}(z)\cap A_0|= V_{w}+U_z$, where $V_w \sim \text{Bernoulli}(p)$, $U_y \sim \text{Binomial}(n-2,p)$ and $U_z \sim \text{Binomial}(n-2,p)$ and $V_w$, $U_y$, $U_z$ are independent.

Define $q_0:=\mathbb{P}(U_y\ge r), $ and $q_1:=\mathbb{P}(U_y\ge r-1)$. 

Now condition on $V_w$. If $V_w=0$, then
$$
I_y=1\iff U_y\ge r,
$$
and similarly for $z$. Hence
$$
\mathbb{P}(I_y=I_z=1| V_w=0)=q_0^2,
$$
because $U_i$ and $U_j$ are independent.
If $V_w=1$, then
$$
I_y=1\iff U_y\ge r-1,
$$
so
$$
\mathbb{P}(I_y=I_z=1|V_w=1)=q_1^2.
$$
Therefore
$$
\mathbb E[I_yI_z]
=
(1-p)q_0^2+p q_1^2.
$$
On the other hand,
$$
\mathbb E I_y
=
(1-p)q_0+p q_1.
$$
The same is true for $I_z$, so
$$
\mathbb E I_y\,\mathbb E I_z
=
\big((1-p)q_0+p q_1\big)^2.
$$
Thus
\begin{align*}
\operatorname{Cov}(I_y,I_z)
&=\mathbb E[I_yI_z]-\mathbb E I_y\,\mathbb E I_y\\
&=(1-p)q_0^2+p q_1^2
-\big((1-p)q_0+p q_1\big)^2,\\
&=p(1-p)(q_1-q_0)^2.
\end{align*}

\begin{claim} \label{claim: Cov}
$$\text{Cov}(I_y,I_z)= O\left(\frac{p\tau^2\lambda^2}{r^2} \right) $$
where $\tau=\mathbb{P}(|\mathcal{N}(y) \cap A_0|\geq r)$. 
\end{claim}
\begin{proof}
Note that it is sufficient to prove that 
\begin{equation}
 q_1-q_0=\mathbb{P}(U_y=r-1)=O\left(\frac{\tau \lambda}{r}\right).   \label{eq: q_1-q_0}
\end{equation} 

We will use Lemmas ~\ref{lemma:Gordon} and \ref{Lemma:McKay} to show (\ref{eq: q_1-q_0}). More precisely, we show  
$$\mathbb{P}(U_y=r-1)=(1+o(1))\frac{\lambda}{r} \mathbb{P}(U_y \geq r-1).$$
Let $ U\sim\text{Binomial}(N,p),$ and write $b(k)=P(U=k), B(k)=P(U\ge k).$
We have the exact identity $
b(k-1;N-1,p)=\frac{k}{Np}\,b(k;N,p).
$
Therefore, Lemma ~\ref{Lemma:McKay} gives
$$
B(k) =\sigma\,\frac{k}{Np}\, b(k)\,Y(x)e^{E/\sigma}.$$
Hence
$$\frac{b(k)}{B(k)} = \frac{Np}{\sigma k} \frac1{Y(x)} e^{-E/\sigma}.$$
Now in our setting,
$ k=r-1,Np=r-\lambda+o(1), \sigma^2=(1+o(1))r,$
and $ k-Np=(1+o(1))\lambda.$
Thus
$$
x=\frac{k-Np}{\sigma}
=(1+o(1))\frac{\lambda}{\sqrt r}
\to\infty.
$$

Then Lemma ~\ref{lemma:Gordon} gives
$$
Y(x)=\frac{1-\Phi(x)}{\phi(x)}
=
\frac{1+o(1)}{x},
$$
so
$$
\frac1{Y(x)}=(1+o(1))x.
$$
Also,
$
0\le \frac E\sigma
\le
\frac{O(1)}{\sigma}
=o(1),
$
because \(\sigma\sim\sqrt r\to\infty\). Therefore
$$
e^{-E/\sigma}=1+o(1).
$$
By Lemma ~\ref{Lemma:McKay},
$$
\frac{P(U=k)}{P(U\ge k)}
=
(1+o(1))
\frac{Np}{k}\frac{x}{\sigma}.
$$
Since $
\frac{Np}{k}=1+o(1),
$
we obtain
$$
\frac{P(U=k)}{P(U\ge k)}
=
(1+o(1))\frac{x}{\sigma}=(1+o(1))\frac{k-Np}{\sigma^2}
=
(1+o(1))\frac{\lambda}{r}.
$$
Finally, note that $\tau =(1-o(1)) \mathbb{P}(U_y \geq r-1)$. 
\end{proof}
Note that $\frac{p\lambda^2}{r^2}=(1+o(1))\frac{C^2\log n}{n}$. Therefore,
$$\text{Var}\left(\sum_{y \in \mathcal{N}(x)}I_y \right)= \sum_{y \neq z}\text{Cov}(I_y,I_z)+\sum_{y}\text{Var}(I_y) \leq O(n\tau^2\log n)+n\tau$$

Let $M=\sum_{y \in \mathcal{N}(x)}I_y$. Note that $\mathbb{E}(M)=\tau n$. Then by Chebyshev's inequality in Lemma \ref{lemma:Chebyshev}, we have 
$$\mathbb{P}\left(M \leq \frac{n\tau}{2} \right) \leq \mathbb{P} \left(|M-\mathbb{E}(M)| \geq \frac{n\tau}{2} \right) \leq \frac{\text{Var}(M)}{(\frac{1}{2} n\tau)^2} \leq O\left(\frac{\log n}{n}\right)+O\left(\frac{1}{n\tau}\right).$$

Now we will show that the number of newly infected vertices in the first step in the neighbor of the vertex $x$ is a good fraction of $\mathbb{E}(M)$. 

The events defining $M$ depend only on $A_0\cap S(x,2)$ since we assume that $x \notin A_0$ and whether $y\in A_0$ depends on $A_0\cap S(x,1)$. They are independent.
Let
$$
H=
|\{y \in \mathcal{N}(x):|\mathcal{N}(y) \cap A_0| \geq r, y\notin A_0\}|.
$$
Conditional on $M=m$, $H\sim\text{Binomial}(m,1-p)$.

Since $p=o(1)$,
$$\mathbb{P}(H\leq \frac{1}{2}m) \leq \exp(-cm),$$
where $c>0$. 

Hence 
$$
\mathbb{P}(H\ge \frac{1}{4}n\tau)\geq 1-n^{-\gamma_0},
$$
where $\gamma_0 >0$. Every one of these $H$ vertices belongs to $A_1\setminus A_0$.

Now we will show these newly infected vertices in the first step is large enough to bridge the gap between $r$ and $np$. 

Note that $n\tau=n^{1-\frac{C^2}{2}+o(1)}$. Let $\beta=1-\frac{C^2}{2}+o(1)$ and choose $\eta>0$ such that $ \frac{a}{2}+\eta<\beta.$
Set $D=n^{a/2+\eta}$ and thus $H \gg D$ with probability at least $1-n^{-\gamma_0}$.

We have $\mathbb{E}|\mathcal{N}(x)\cap A_0|=r-\lambda$.  Since
$ D\gg\lambda $
and $ \frac{D^2}{r}=n^{2\eta},$ Lemma ~\ref{lemma: chernoff} gives
$$
\mathbb{P}(|\mathcal{N}(x) \cap A_0|<r-D)
\le
\exp\{-\Theta(n^{2\eta})\}.
$$
Therefore, 
\begin{align*}
\mathbb{P}(\{|\mathcal{N}(x) \cap A_0|\ge r-D\} \cap \{H \ge D\})^c & \leq \mathbb{P}(|\mathcal{N}(x) \cap A_0|\le r-D) + \mathbb{P}(H \leq D) \\ 
& \leq \exp(-\Theta(n^{2\eta}))+ n^{-\gamma_0}
\end{align*}

Thus we have 
$$\mathbb{P}(x\notin A_2)\le n^{-\gamma}. \qedhere$$

\end{proof}

We need the following lemma to establish that $\mathbb{P}(x \notin A_5)$ is small enough to apply a union bound over all $2^n$ vertices. 
\begin{lemma}
Assume that $x \in V(Q_n)$  and $\mathbb{P}(x \notin A_2) \leq n^{-\gamma}$, where $\gamma$ is a positive constant. Then we have 
$$\mathbb{P}(x \notin A_5) \leq \exp(-cn \log n),$$
where $c$ is a positive constant.
\end{lemma}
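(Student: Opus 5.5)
We follow the Balogh--Bollob\'as--Morris scheme: use the hypercube's sphere structure to produce many \emph{independent} chances for $x$ to be infected, and amplify the bound $n^{-\gamma}$ multiplicatively. Fix $x=[0]^n$. Apply Lemma~\ref{lemma: graph partition for hypercube} with $k=3$, partitioning $S(x,3)$ into $m\le 2n^2$ classes $B_1,\dots,B_m$ with pairwise distances $\ge 6$ inside each class. For $u\in S(x,3)$, whether $u\in A_2$ depends only on $A_0\cap\{v:d_H(v,u)\le 2\}$. For two vertices of the same class these balls are disjoint, so the events $\{u\notin A_2\}$, $u\in B_j$, are mutually independent. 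Since $|S(x,3)|=\binom n3$ and $m=O(n^2)$, a typical class has size $\Theta(n)$. We therefore restrict to classes with $|B_j|\ge cn$; counting shows these cover a constant fraction of $S(x,3)$.

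\textbf{Step 1 (many infected vertices in $S(x,3)$ at time 2).} For a class $B_j$ with $|B_j|\ge cn$, let $N_j$ be the number of $u\in B_j\setminus A_2$. By independence,
$$\mathbb{P}(N_j\ge \delta |B_j|)\le \binom{|B_j|}{\delta|B_j|} n^{-\gamma\delta|B_j|}\le \exp(-c'\,n\log n)$$
for any fixed $\delta>0$. A union bound over the $O(n^2)$ classes then shows that, outside an event of probability $\exp(-cn\log n)$, all but a $\delta$-fraction of $S(x,3)$ lies in $A_2$.

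\textbf{Step 2 (propagating inward to $x$ by time 5).} Each $w\in S(x,2)$ has $n-2$ neighbours in $S(x,3)$, and each $y\in S(x,1)$ has $n-1$ neighbours in $S(x,2)$. On the good event of Step 1, a counting argument shows that all but a small fraction of $S(x,2)$ have at least $(1-\sqrt\delta)(n-2)\ge r$ neighbours in $A_2$, since $r=o(n)$. These vertices lie in $A_3$. Repeating the count, at most $O(\delta^{1/4})n$ vertices of $S(x,1)$ fail to have $r$ infected neighbours in $A_3$, so the rest lie in $A_4$. Since $(1-O(\delta^{1/4}))n\ge r$, we get $x\in A_5$. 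The room here is huge because $r=n^{a+o(1)}\ll n$, so a constant $\delta$ suffices.

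\textbf{Main obstacle.} The difficulty is the averaging in Step 2. Step 1 only guarantees that most of $S(x,3)$ is infected in aggregate, while infection of a particular $w\in S(x,2)$ needs $r$ infected neighbours among its own $n-2$ neighbours in $S(x,3)$. The bad vertices could in principle concentrate around particular $w$. Markov-type counting over the double-counted edges between $S(x,2)$ and $S(x,3)$ should handle this: each vertex of $S(x,3)$ has exactly $3$ neighbours in $S(x,2)$, so $\delta\binom n3$ bad vertices yield at most $3\delta\binom n3$ bad edges. Hence at most $O(\sqrt\delta)\binom n2$ vertices $w$ have more than $\sqrt\delta\, n$ bad neighbours. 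A similar count works from $S(x,2)$ to $S(x,1)$, where each vertex of $S(x,2)$ has exactly $2$ neighbours in $S(x,1)$.

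A second subtlety is that the classes of size smaller than $cn$ must not cover too much of $S(x,3)$. This follows since the number of classes is $m\le 2n^2$: vertices in classes of size below $cn$ number at most $2cn^3\le 12c\binom n3(1+o(1))$, which is a small fraction of $S(x,3)$ when $c$ is small. Any residual dependence on $A_0$ near $x$ itself is harmless, because Step 2 is deterministic given the good event of Step 1.
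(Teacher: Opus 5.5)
Your proposal is correct and is essentially the paper's argument: you use the same partition of $S(x,3)$ from Lemma~\ref{lemma: graph partition for hypercube}, the same independence within classes giving a $\binom{\cdot}{cn}(n^{-\gamma})^{cn}$ bound summed over $O(n^2)$ classes, and the same double counting of edges between consecutive spheres. The only difference is direction: the paper argues contrapositively that $x\notin A_5$ forces $|S(x,3)\setminus A_2|\ge (n-r-1)(n-r)(n-r+1)/6$, then pigeonholes to a single class with at least $cn$ uninfected vertices, which avoids your separate treatment of small classes and the $\sqrt{\delta}$, $\delta^{1/4}$ bookkeeping.
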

\begin{proof}
Suppose that $x \notin A_5$. Then $x$ has at most $r-1$ neighbors in $A_4$. Since $\mathcal{N}(x) \subset S(x,1)$, we have $ B:=S(x,1) \setminus A_4 $ of size $n-r+1$. 

Now consider the neighborhood of the set $B$. Let $y \in B$ and then $y$ has at most $r-1$ neighbors in $A_3$. Since $y$ has exactly $n-1$ neighbors in $S(x,2)$, we have 
$$|\mathcal{N}(y) \cap (S(x,2) \setminus A_3)| \geq (n-1)-(r-1)=n-r.$$

Now let us count the number of edges between $B$ and $S(x,2) \setminus A_3$, denoted by $e(B,S(x,2) \setminus A_3)$. We have
$$e(B,S(x,2) \setminus A_3) \geq |B|(n-r).$$
Note that every vertex in $S(x,2)$ has at most $2$ neighbors in $B$. Define $E:=S(x,2) \setminus A_3$ and we have 
$$2|E| \geq e(B,S(x,2) \setminus A_3).$$ 

Now let us count the number of edges between $E$ and $S(x,3) \setminus A_2$, denoted by $e(E,S(x,3) \setminus A_2)$. Every vertex in $E$ has at most $r-1$ neighbors in $A_2$. Since every vertex in $E$ has exactly $n-2$ neighbors in $S(x,3)$, we have 
$$e(E,S(x,3) \setminus A_2) \geq |E|(n-r-1).$$
Note that every vertex in $S(x,3)$ has at most $3$ neighbors in $E$. Therefore, define $F:= S(x,3) \setminus A_2$ such that 
$$3|F| \geq e(E,S(x,3) \setminus A_2).$$ Therefore, 
$$|F| \geq \frac{(n-r-1)(n-r)(n-r+1)}{6}.$$

Then by Lemma ~\ref{lemma: graph partition for hypercube}, we have a partition for $F$ such that 
$$F= \cup_{i=1}^m F_i$$
where $m \leq 2n^2$ and for every $j \in [m]$, $d_H(a,b) \geq 6$ with $a,b \in F_j$. Note that $\text{support}(a) \cap \text{support}(b)=\emptyset$ for all $a, b \in F_j$. Thus $|F_j| \leq \frac{n}{3}$ for all $j \in [m]$. 

There exists an $i \in [m]$ such that $|F_i| \geq cn$ where $c$ is a positive constant. Since the event $x \in A_2$ is independent of the event that $y \in A_2$ for any $x,y \in F_i$. Therefore, we have 
\begin{align*}
\mathbb{P}(x \notin A_5) & \leq \sum_{j \in [m]} \sum_{\substack{F \subset F_j \\|F|=cn}}\mathbb{P}(F \cap A_2=\emptyset)\\
& \leq 2n^2\binom{\frac{n}{3}}{cn}(n^{-\gamma})^{cn}\\
&=\exp(-cn \log n). \qedhere
\end{align*}
\end{proof}

Now we are ready to prove the upper bound of Theorem ~\ref{main theorem}.
$$\mathbb{P}(A_0 \; \text{does not percolate})\leq \mathbb{P}(\exists x \notin A_5) \leq \sum_{x \in V(Q_n)}\exp(-cn \log n)=o(1).$$

\section{Lower Bound}
In this section we will prove the lower bound in Theorem~\ref{main theorem} for the initial infection probability $p'=\frac{r}{n}-\left(\sqrt{2-a}+\epsilon \right)\frac{\sqrt{r \log n}}{n} $. Following the ideas from \cite{balogh_bollobas_morris_2009} let us introduce a new process closely related to the bootstrap percolation process. The only difference is a less strict requirement for a vertex to be infected at step $i$ for $i \in [k]$ where $k >\frac{2}{a}$. The main new idea is to let the process go for a constant number of steps depending on the value of $a$. 

Now let us formally define this new process. Let $B_i$ denote the set of infected vertices up to step $i$. We define this process as follows: 

* Initially, each vertex is infected with probability $p'$, independent of other vertices.

* If a vertex becomes infected, then it remains infected forever.

*A healthy vertex will be infected at step 1 if it has at least $\rho_1$ infected neighbors at the initial step, i.e., $x \in B_0$ if $x \in B_0$ or $|\mathcal{N}(x) \cap B_0| \geq \rho_1$.

*For $i \in [k]$, a healthy vertex will be infected at step $i$ if it has at least $\rho_i$ infected neighbors at the first step, i.e., $x \in B_i$ if $x \in B_{i-1}$ or $|\mathcal{N}(x) \cap B_{i-1}| \geq \rho_i$.

* Let $i \geq k+1$. A healthy vertex will be infected at step $i$ if it has at least $r$ infected neighbors at step $i$, i.e., $x \in B_{i}$ if $x \in B_{i-1}$ or $|\mathcal{N}(x) \cap B_{i-1}| \geq r$.

Now we need to properly define $\rho_i$ for $i \in [k]$.  Recall $ r=f(n)n^a=n^{a+o(1)}$. Let $p'=\frac{r-\lambda'}{n}$ and $ 
\lambda'=C'\sqrt{r\log n},$ where $C'=\sqrt{2-a}+\epsilon$. Set $\delta:=\frac{C'^2}{2}-\left(1-\frac a2\right)>0$.

Choose a fixed integer $k$ such that $k>\frac{2}{a}$ and let $K$ be a constant, chosen sufficiently large which may depend on $a$. Let $d_2=\left\lfloor\frac{\lambda'}{\log n}\right\rfloor$ and $
d_3=\cdots=d_{k+1}=\lfloor q\rfloor,$
where $q= \left( \frac{3^kk!K\,n}{d_2\log n} \right)^{1/(k-1)}$.

\begin{comment}
Now
\[
d_2=n^{a/2+o(1)}
\]
and
\[
q=n^{b+o(1)},\qquad
b=\frac{1-a/2}{k-1}.
\]
Since \(k>2/a\),
\[
\frac{1-a/2}{k-1}<\frac a2.
\]
Thus
\[
q=o(n^{a/2})=o(\lambda),
\]
and also
\[
d_2=o(\lambda).
\]
\end{comment}

Define $ D':=\sum_{i=2}^{k+1}d_i,$ and then let 
$$
\rho_1=r-D'
$$
and
$$
\rho_j=\rho_{j-1}+d_j,\qquad 2\le j\le k+1.
$$
Note that $\rho_{k+1}=r.$
\begin{lemma} \label{lemma: B_k B_k+1}
  For this new bootstrap percolation process, we have 
  $$\mathbb{P}(B_k=B_{k+1})=1-o(1).$$
\end{lemma}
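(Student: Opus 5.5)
We plan to show that, with probability $1-o(1)$, no vertex lies in $B_{k+1}\setminus B_k$. We fix $x$, bound $\mathbb{P}(x\in B_{k+1}\setminus B_k)$ by $\exp(-\omega(n))$, and take a union bound over the $2^n$ vertices. Since $\rho_{k+1}=r$, the event $x\in B_{k+1}\setminus B_k$ forces a nested structure. Here $x\notin B_k$ but $|\mathcal{N}(x)\cap B_k|\ge r=\rho_k+d_{k+1}$. Because $x\notin B_{k}$ also gives $|\mathcal{N}(x)\cap B_{k-1}|<\rho_k$, at least $d_{k+1}$ neighbours of $x$ lie in $B_k\setminus B_{k-1}$. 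Each such neighbour $y$ satisfies $|\mathcal{N}(y)\cap B_{k-1}|\ge\rho_k$ but $|\mathcal{N}(y)\cap B_{k-2}|<\rho_{k-1}$, so it has at least $d_k$ neighbours in $B_{k-1}\setminus B_{k-2}$. Iterating down to step $1$ gives a \emph{witness chain}: sets $W_k,\dots,W_1$ with $W_j\subseteq B_j\setminus B_{j-1}$, lying in spheres $S(x,k+1-j)$ (or within distance $k$ of $x$), of sizes roughly $d_{k+1}, d_{k+1}d_k/c,\dots$. The bottom layer $W_1$ consists of vertices $v\notin B_0$ with $|\mathcal{N}(v)\cap B_0|\ge \rho_1=r-D'$. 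Its size is at least of order $\prod_{j=2}^{k+1} d_j/ (3^k k!)$, after dividing by the bounded multiplicity with which a vertex at distance $j$ is a neighbour of vertices at distance $j-1$ (as in the counting in the upper bound). By the choice of $q$ this is about $Kn/\log n$.

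\textbf{Key estimate.} Since $d_2,q=o(\lambda')$ when $k>2/a$, we have $D'=o(\lambda')$. Petrov and Gordon (Lemmas~\ref{lemma: Petrov}, \ref{lemma:Gordon}) then give
$$
\mathbb{P}(|\mathcal{N}(v)\cap B_0|\ge\rho_1)=n^{-C'^2/2+o(1)}=n^{-(1-a/2)-\delta+o(1)}.
$$
We need the probability that a specific set $W_1$ of size $s\approx Kn/\log n$ is entirely infected at step $1$. To get independence we pass, via Lemma~\ref{lemma: graph partition for hypercube}, to a subfamily of $W_1$ of size $s/n^{O(1)}$ whose members are pairwise at distance $\ge 3$, so their neighbourhoods are disjoint. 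Alternatively, we count, via Bernstein's inequality (Lemma~\ref{lemma: Bernstein}), the total number of $B_0$-edges into $W_1$. This total must be at least $s\rho_1$ while its mean is $s(r-\lambda')$, a deviation of $s(\lambda'-D')$ with variance about $s r\cdot$(bounded overlap). This yields a bound
$$
\exp\bigl(-(1-o(1))\,s\,C'^2\log n/2\bigr)=n^{-s((1-a/2)+\delta+o(1))}.
$$
The union bound then runs over all chains. There are at most $n^{s(1-a/2)+o(s\log n)}$ choices of the bottom layer given the upper layers, because each upper vertex chooses its $d_j$ children among $n$ neighbours, i.e.\ $\binom{n}{d_j}\le (en/d_j)^{d_j}$, and $n/d_2=n^{1-a/2+o(1)}$. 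The choices of the upper layers contribute only $\exp(O(q^{k-1}\cdot\text{poly}\log))$, which is smaller by the factor $\log n$ built into $q$. The gain $n^{-\delta s}=\exp(-\delta K n)$ with $K$ large therefore beats both the chain count and the $2^n$ vertices.

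\textbf{Main obstacle.} The hardest part is the exponent bookkeeping in the bottom layer. We must ensure that the entropy of choosing the $d_2$-fold children, about $d_2\log(n/d_2)$ per parent, is strictly dominated by the large-deviation cost $d_2\cdot C'^2\log n/2$ per parent, and that the loss incurred in making the events independent (or the overlap term in Bernstein's variance) is only $o(\log n)$ per vertex. We will first try the Bernstein route on the aggregated edge count, since it avoids the sparsification losses. We will treat vertices $v\in W_1$ whose neighbourhoods overlap within the chain by noting that two hypercube vertices share at most $2$ neighbours, so the overlap contributes only $O(s^2/n)$ edges, which is $o(s\lambda')$. Finally, we check that the intermediate layers hold: the mild requirements $d_j\ge \lfloor q\rfloor$ survive the multiplicity constants $3^k k!$ that are absorbed into $q$.
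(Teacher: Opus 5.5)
Your architecture is the paper's: a witness chain from $x$ down to a bottom layer in $S(x,k)\cap G_1$, Bernstein applied to the aggregated count of initially infected neighbours of that layer, and a union bound over chains and over the $2^n$ vertices. The gap is in the step you flag as the crux, the entropy of the bottom layer. You count it per parent, $\binom{n}{d_2}$ for each vertex of $W_2$. That gives about $\exp\bigl(|W_2|\,d_2\,(1-\tfrac a2+o(1))\log n\bigr)$, which you compare with a cost of $d_2C'^2\log n/2$ \emph{per parent}.

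But Bernstein charges the cost only on the union $W_1$, and children are shared. Each vertex of $S(x,k)$ has $k$ neighbours in $S(x,k-1)$, so $|W_1|$ can be as small as about $|W_2|d_2/k$. By your own size estimate, $s\approx |W_2|d_2/(3k)$. So your count is really $n^{3ks(1-a/2)(1+o(1))}$, not the $n^{s(1-a/2)+o(s\log n)}$ you assert. The per-parent comparison would only be valid if the children sets of distinct parents were disjoint, and nothing guarantees that. Since $\delta=C'^2/2-(1-a/2)$ can be arbitrarily small, a constant factor in front of $\log n$ is fatal. As written, the union bound closes only when $C'^2/2$ exceeds $1-a/2$ by a factor of order $k$, far from the sharp constant $\sqrt{2-a}$.

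The repair is the paper's count. Take the bottom layer to be an $s_k$-subset of the at most $ns_{k-1}$ outward neighbours of the previous layer. The number of choices is then $\binom{ns_{k-1}}{s_k}\le (ens_{k-1}/s_k)^{s_k}$, and since $s_{k-1}/s_k=(1+o(1))\,3k/d_2$, this is $\exp\bigl(s_k(\log(n/d_2)+O(1))\bigr)$. The multiplicity constant now sits inside the logarithm, and entropy and Bernstein cost are charged on the same $s_k$ vertices.

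Two further points on the Bernstein step:
\begin{itemize}
\item Restrict the edge count to neighbours in $S(x,k+1)$, where the weights are at most $k+1$, and discard the $k$ inward edges per vertex deterministically. If you count all $B_0$-edges into $W_1$, a single $z\in S(x,k-1)$ can be adjacent to many vertices of $W_1$. The weights are then unbounded, and the condition $Mu=o(V)$ fails for some of the sets you union-bound over.
\item What you need from the overlap is $\sum_z d_{W_1}(z)^2\le ns+O(s^2)=(1+o(1))ns$, using $s=O(n/\log n)$. This gives $V=(1+o(1))rs$ rather than merely $O(rs)$; a constant-factor loss in the variance would be just as fatal as the entropy issue.
\end{itemize}

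Drop the sparsification alternative entirely: it would leave only $O(1/\log n)$ vertices. Finally, the upper layers are negligible because $q^{k-1}\log n=O(n/d_2)$ with $d_2=n^{a/2+o(1)}$, not because of the $\log n$ inside $q$.
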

\begin{proof}
We now outline the main ideas of the proof. Suppose that $x\in B_{k+1}\setminus B_k$. The gaps between the infection thresholds during the first $k$ steps allow us to construct a chain of sets $T_\ell\subseteq S(x,\ell)\cap (B_{k-\ell+1}\setminus B_{k-\ell})$, for $\ell\in[k]$. Moreover, the sizes of these sets can be carefully controlled. At the final level $\ell=k$, we consider the graph $G$ induced by the vertices in $T_k\cup(\mathcal{N}(T_k)\cap S(x,k+1))$. Since $T_k\subseteq S(x,k)\cap(B_1\setminus B_0)$, the infection rule ensures that the vertices in $T_k$ have sufficiently many initially infected neighbors in $S(x,k+1)$. We therefore introduce a random variable counting the edges of $G$ incident to initially infected vertices in $S(x,k+1)$. By estimating its expectation and variance and applying Bernstein's inequality, we obtain a sufficiently strong exponential tail bound to overcome the required union bounds.

Write $G_j=B_j\setminus B_{j-1}.$ If
$$
y\in G_j,\qquad j\ge2,
$$
then
$$
|\mathcal{N}(y)\cap B_{j-1}|\ge\rho_j. 
$$
We also have , because $y\notin B_{j-1}$,
$$
|\mathcal{N}(y)\cap B_{j-2}|\le\rho_{j-1}-1.
$$
Therefore
\begin{equation}
|\mathcal{N}(y)\cap G_{j-1}|\ge d_j+1. \label{eq:d_j+1}
\end{equation}

We need the following claim.  
\begin{claim} \label{claim: witness}
Suppose $x\in G_{m+1},1\le m\le k.$ Starting with $T_0^m=\{x\}$, define
$$
s_0^m=1,\qquad
s_\ell^m=
\left\lfloor
\frac{d_{m+2-\ell}}{3\ell}s_{\ell-1}^m
\right\rfloor ,
\qquad 1\le\ell\le m.
$$
Then there exist
$$
T_\ell^m\subseteq S(x,\ell)\cap G_{m+1-\ell},
\qquad |T_\ell^m|=s_\ell^m.
$$
\end{claim}
\begin{proof}
Indeed, suppose $T_{\ell-1}^m$ has been constructed. Every $ y\in T_{\ell-1}^m
$ has at least $ d_{m+2-\ell}+1$ neighbors in $G_{m+1-\ell}$ by (\ref{eq:d_j+1}).
At most $\ell-1$ of those are inward neighbors relative to $x$. Hence, since all $d_i\to\infty$, it has at least $\frac12d_{m+2-\ell}$
outward neighbors in $S(x,\ell)\cap G_{m+1-\ell}$.

Every vertex of $S(x,\ell)$ has exactly $\ell$ neighbors in $S(x,\ell-1)$. Hence the union of those outward neighbors has size at least
$$
\frac{d_{m+2-\ell}}{2\ell}|T_{\ell-1}^m|
\ge s_\ell^m.
$$
Thus the recursion is valid.
\end{proof}
Fix an integer $m$ between $1$ and $k$. Let $x \in G_{m+1}$ and $T^m_{0}=\{x\}$. Then we define a depth-$m$ witness chain as a sequence of sets $T_0^{(m)},T_1^{(m)},\ldots,T_m^{(m)}$, where $T_j^{(m)}$ lies in $S(x,j) \cap G_{m+1-j}$ and has size $s_j^{(m)}$ for $j \in [m]$. 

Because $m\le k$ is fixed and every $d_i\to\infty$, floor errors are negligible and so
$$
s_{\ell}^m
=
(1+o(1))
\frac{d_{m+1}d_m\cdots d_{m+2-\ell}}{3^{\ell} \ell!}.
$$

Fix $1 \leq m \leq k$ and $T\subset S(x,m)$ such that $|T|=s= s_{m}^m.$ Note that for $m \leq k$ and $n$ large enough, we have 
$$s_m^m \leq s_k^k=(K+o(1))\frac n{\log n}.$$

If $T\subset G_1$, each $y\in T$ has at least
$$
r-D'
$$
initially infected neighbors.
It has exactly $m$ neighbors in $S(x,m-1)$, so it has at least
$$
r-D'-m
$$
infected neighbors in $S(x,m+1)$. 

Now consider a vertex $z\in S(x,m+1)$, define
$$
d_T(z):=|\mathcal{N}(z)\cap T|
$$
and
$$
Y_T:=
\sum_{z\in S(x,m+1)}
d_T(z)\mathbb{I}_{\{z\in A_0\}}.
$$
Then
\begin{equation}
T\subset G_1
\quad\Longrightarrow\quad
Y_T\ge(r-D'-m)s. \label{eq: implies}
\end{equation}

Now in order to apply the Bernstein's inequality we need to calculate $\mathbb{E}(Y_T)$ and $\text{Var}(Y_T)$.

Since $ \sum_zd_T(z)=(n-m)s,$ we have 
$$
\mu':=\mathbb EY_T=p'(n-m)s.
$$
The required deviation is therefore
$$
u:=(r-D'-m-p'(n-m))s.$$
Note that $q=n^{\frac{1-a/2}{k-1}+o(1)}$. Since $k >\frac{2}{a}$, we have $\frac{1-a/2}{k-1}< \frac{a}{2}$ and thus $q=o(\lambda')$. Since $np'=r-\lambda'$ and $D'=o(\lambda')$, $u=(1-o(1))\lambda' s$.

Now let us calculate $\text{Var}(Y_T)$. We have 
$$V:=\text{Var}(Y_T)=np(1-p)\sum_{z \in S(x,m+1)}d_T(z)^2.$$
We will write $d_T(z)^2=d_T(z)+2 \binom{d_T(z)}{2}$. Thus 
$$\sum_{z}d_T(z)^2=\sum_{z}d_T(z)+\sum_{z}2 \binom{d_T(z)}{2}.$$

Since every pair of vertices in $S(x,m)$ can only share at most one common neighbor in $S(x,m+1)$, we have 
$$\sum_{z \in S(x,m+1)}\binom{d_T(z)}{2} \leq \binom{|T|}{2}.$$
Thus 
$$
\sum_{z \in S(x,m+1)}d_T(z)^2 \leq (n-m)s+2\binom{s}{2}.
$$

Since $ s\le (K+o(1))\frac n{\log n}$, we have uniformly over every possible $T$,
$$\sum_zd_T(z)^2=(1+o(1))ns.$$

Therefore
$$V
\le(1+o(1))p'ns
=(1+o(1))rs.$$

Apply Bernstein's inequality in Lemma \ref{lemma: Bernstein}  for the independent random variables $d_T(z)(\mathbb{I}_{\{z \in A_0\}}-p')$ for $z \in S(x,m+1)$. Note that 
$|d_T(z)(\mathbb{I}_{\{z \in A_0\}}-p')| \leq \binom{m+1}{m} \leq k+1$

Thus Bernstein's inequality gives
$$
\mathbb{P}(Y_T-\mu'\ge u)
\le
\exp\left\{-\frac{u^2} {2(V+(k+1)u/3)}
\right\}.
$$
Furthermore,
$$
\frac{u}{V}
=
O\left(\frac{\lambda'}{r}\right)
=
O\left(\sqrt{\frac{\log n}{r}}\right)
=o(1).
$$
By \ref{eq: implies}, we have
$$
\mathbb{P}(T\subset G_1)
\le
\exp\left[
-\left(\frac{C'^2}{2}-o(1)\right)s\log n
\right].
$$

Given $T_{\ell-1}^m$, every possible vertex of $T_\ell^m$ is an outward neighbor of some vertex of $T_{\ell-1}^m$. There are at most $ns_{\ell-1}$ such vertices available for $T_{\ell}^m$.
Therefore the total number $N_m$ of possible depth-m witness chains is at most
$$
N_m\le
\prod_{\ell=1}^m
\binom{ns_{\ell-1}^m}{s_\ell^m}.
$$
At the last step, we have 
$
\frac{s_m^m}{s_{m-1}^m}
=
(1+o(1))\frac{d_2}{3m},
$ and hence
$$
\log\binom{ns_{m-1}^m}{s_m^m}
\le
s_m^m
\left(
\log\frac n{d_2}+O(1)
\right).
$$
Since
$
d_2=\frac{\lambda'}{\log n}(1+o(1))
=n^{a/2+o(1)},
$
we have
$$
\log\frac n{d_2}
=
\left(1-\frac a2+o(1)\right)\log n.
$$
Therefore, the contribution from the last level is at most 
$$\left(1-\frac{a}{2}+o(1)\right)s_m^m \log n.$$

Now let us show that the earlier levels only contribute to the lower-order terms. 
For each $\ell$,
$$
\frac{s_\ell^{m}}{s_{\ell-1}^{m}}
=
(1+o(1))
\frac{d_{m+2-\ell}}{3\ell}.
$$
Every $d_i$ tends to infinity, while $m$ is fixed. Thus each successive witness level is much larger than the previous one, i.e.,
$$
s_1^m
\ll
s_2^m
\ll
\cdots
\ll
s_m^m.
$$
In particular,
$$
s_1^{m}+\cdots+s_{m-1}^{m}
=
o\left(s_m^{m}\right).
$$
For every earlier level, the logarithmic factor
$$
\log\left(
\frac{e n s_{\ell-1}^{(m)}}{s_\ell^{(m)}}
\right)
$$
is at most of order $\log n$. Therefore the total contribution of all levels before the last one is
$$
o\left(s_m^{m}\log n\right).
$$

Consequently,
$$\log N_m
\le
\left(1-\frac a2+o(1)\right)s_m^m\log n.$$

Thus,
\begin{align*}
\mathbb{P}(x\in G_{m+1})
&\le
N_m
\exp\left[
-\left(\frac{C'^2}{2}-o(1)\right)s_m^m\log n
\right]\\
&\le
\exp\left[
-\left(
\frac{C'^2}{2}-1+\frac a2-o(1)
\right)s_m^m\log n
\right].
\end{align*}

Since $\delta=\frac{C'^2}{2}-1+\frac a2>0$,
$$
\mathbb{P}(x\in G_{m+1})
\le
\exp[-(\delta-o(1))s_m^m\log n].
$$
Let $m=k$ and thus we have 
$$
\mathbb{P}(x\in G_{k+1})
\le
\exp[-(\delta K-o(1))n].
$$
By Choosing $K$ large enough, we have 
$$
\mathbb{P}(G_{k+1}\neq \emptyset)
\le
2^n \exp(-(\delta K-o(1))n)
=o(1). \qedhere
$$
\end{proof}

Now we are ready to prove the lower bound of Theorem ~\ref{main theorem}. 
\begin{proof}
If we can show that $B_k \neq V(Q_n)$ with high probability, then combining with lemma ~\ref{lemma: B_k B_k+1} completes the proof

Let $x \in V(Q_n)$. Then we have $\mathbb{P}(x\in B_0)=p=o(1)$,
$$\mathbb{P}(x\in G_1)
\le n^{-C'^2/2+o(1)}=o(1),$$
and 
$$
\mathbb{P}(x\in G_j)=o(1),\qquad 2\le j\le k.
$$
Hence
$$
\mathbb{P}(x\in B_k)\leq \mathbb{P}(x \in B_0)+\sum_{i=1}^k \mathbb{P}(x \in G_i)=o(1).
$$
Therefore, 
$$
\mathbb{P}(B_k=V(Q_n))\le\mathbb{P}(x\in B_k)=o(1). \qedhere
$$
\end{proof}

\section{Discussion and Open Problems}
Question 1. What is the third-order term of $p_c(Q_n,r)$, where $r=f(n)n^a$ and $f(n)=n^{o(1)}$ for $0<a<1$ ?

In \cite{balogh_bollobas_morris_2009}, the third-order term of the critical probability was determined up to a constant factor. It is therefore natural to ask whether a similar result can be established in our setting. At present, however, we do not even have a conjecture regarding the form of the third-order term, although the heuristics discussed in the introduction may still provide some insight. Determining this term would likely require substantial refinements of several arguments developed in this paper.

Question 2. What is $p_c(Q_n,r)$ for other ranges of $r$?

When $r=2$, this problem was first solved by Balogh and Bollob{\'a}s in \cite{balogh_bollobas_2006}, and their results were subsequently refined by Balogh, Bollob{\'a}s, and Morris in \cite{balogh_bollobas_morris_2010}. However, the problem remains largely open for other regimes of $r$. In particular, the case where $r \geq 3$ is a fixed constant is especially interesting, as it has remained open for approximately two decades, largely due to the lack of a sufficiently precise characterization of percolating sets.

\section*{Acknowledgments}
The author would like to thank Prof.~Alexander Barg for suggesting this problem and providing valuable guidance. Additionally, the author extends thanks to Prof.~J{\'o}zsef Balogh for answering questions related to \cite{balogh_bollobas_morris_2009}, Yihan Zhang for insightful discussions, and Anna Geisler for pointing out \cite{collares2024universalbehaviourmajoritybootstrap}.

\section*{AI Disclosure}
ChatGPT 5.6 Plus was used in the preparation of this paper. In particular, during discussions about possible candidates for the second-order term of the critical probability, ChatGPT assisted the author in conjecturing its correct form. Additionally, in the proof of Claim ~\ref{claim: Cov}, ChatGPT suggested to apply McKay's estimate in Lemma ~\ref{Lemma:McKay} to establish the desired result. All other mathematical arguments and claims in this paper were developed by the author, who bears full responsibility for the correctness of the results.
\bibliographystyle{plain}
\bibliography{main.bib}
\end{document}